\documentclass[11pt]{article}
\title{Topological obstruction for Anosov representations into $\PO(p, q)$}
	\author{Kewei Li}
	\date{}

\usepackage[utf8x]{inputenc}
\usepackage[T1]{fontenc}
\usepackage{lmodern}         %

\usepackage{cite}

\usepackage{natbib} 
\usepackage{array}

\usepackage[a4paper,top=3cm,bottom=2cm,left=3cm,right=3cm,marginparwidth=1.75cm]{geometry}

\usepackage{tikz-cd}

\usepackage{geometry}
\usepackage{graphicx}
\usepackage{pifont}
\usepackage{amsfonts}
\usepackage{mathrsfs}
\usepackage{amsmath}
\usepackage{amssymb}
\usepackage{extarrows}
\usepackage{soul}
\usepackage{enumitem}
\usepackage{amsthm}

\usepackage{microtype}

\makeatletter
\newsavebox{\@brx}
\newcommand{\llangle}[1][]{\savebox{\@brx}{\(\m@th{#1\langle}\)}%
  \mathopen{\copy\@brx\kern-0.5\wd\@brx\usebox{\@brx}}}
\newcommand{\rrangle}[1][]{\savebox{\@brx}{\(\m@th{#1\rangle}\)}%
  \mathclose{\copy\@brx\kern-0.5\wd\@brx\usebox{\@brx}}}
\makeatother

\usepackage{amsthm,amsmath,amssymb}

\usepackage{verbatim}

\usepackage{url}
\usepackage[colorinlistoftodos]{todonotes}
\usepackage[colorlinks=true, allcolors=blue]{hyperref}

\numberwithin{equation}{subsection}

\usepackage{amsthm}  
\theoremstyle{fancy}
\usepackage{environ,tikz}

\let\caron\v

\newcommand{\bbN}{\mathbb{N}}

\newcommand{\bbZ}{\mathbb{Z}}

\newcommand{\bR}{\mathbb{R}}

\newcommand{\bP}{\mathbb{P}}

\newcommand{\calC}{\mathcal{C}}

\newcommand{\fa}{\mathfrak{a}}

\newcommand{\fg}{\mathfrak{g}}
\newcommand{\fk}{\mathfrak{k}}
\newcommand{\fh}{\mathfrak{h}}
\newcommand{\fu}{\mathfrak{u}}

\newcommand{\fp}{\mathfrak{p}}
\newcommand{\fq}{\mathfrak{q}}

\renewcommand{\so}{\mathfrak{so}}

\newcommand{\gl}{\mathfrak{gl}}

\newcommand{\li}{\limits}

\newcommand{\Ker}{\operatorname{Ker}}

\newcommand{\Hom}{\operatorname{Hom}}

\renewcommand{\det}{\operatorname{det}}

\newcommand{\Span}{\operatorname{Span}}

\newcommand{\Graph}{\operatorname{Graph}}

\newcommand{\Stab}{\operatorname{Stab}}

\newcommand{\Ad}{\operatorname{Ad}}
\newcommand{\ad}{\operatorname{ad}}

\renewcommand{\v}[1]{\vert #1 \vert}
\newcommand{\V}[1]{\Vert #1 \Vert}
\newcommand{\inp}[1]{\l\langle #1 \r\rangle}

\renewcommand{\t}{\tilde}

\newcommand{\inv}{^{-1}}

\newcommand{\varep}{\varepsilon}
\newcommand{\Over}{\overline}
\renewcommand{\O}{\overline}
\newcommand{\rtar}{\rightarrow}

\newcommand{\hrtar}{\hookrightarrow}

\newcommand{\pa}{\partial}
\newcommand{\al}{\alpha}
\newcommand{\be}{\beta}
\renewcommand{\th}{\theta}
\newcommand{\Th}{\Theta}

\newcommand{\La}{\Lambda}
\newcommand{\ga}{\gamma}
\newcommand{\Ga}{\Gamma}

\newcommand{\de}{\delta}
\newcommand{\De}{\Delta}

\renewcommand{\O}{\operatorname{O}}

\newcommand{\PO}{\operatorname{PO}}
\newcommand{\SO}{\operatorname{SO}}

\newcommand{\SU}{\operatorname{SU}}

\newcommand{\PGL}{\operatorname{PGL}}
\newcommand{\opnm}{\operatorname}
\newcommand{\G}{\operatorname{G}}

\newcommand{\Ein}{\operatorname{Ein}}
\newcommand{\IsGr}{\operatorname{IsGr}}
\newcommand{\CoIsGr}{\operatorname{CoIsGr}}

\newcommand{\ob}{\operatorname{b}}
\newcommand{\og}{\operatorname{g}}

\renewcommand{\l}{\left}
\renewcommand{\r}{\right}

\newcommand{\mbf}{\mathbf}

\newtheorem{theo}{Theorem}[section]
\newtheorem{lem}[theo]{Lemma}
\newtheorem{coro}[theo]{Corollary}

\newtheorem{prop}[theo]{Proposition}

\newtheorem*{crit*}{Criterion}

\usepackage{float}

\newtheorem*{maintheo}{Main Theorem}

\theoremstyle{definition}
\newtheorem{defn}[theo]{Definition}
\newtheorem{rem}[theo]{Remark}

\newtheorem*{theo*}{Theorem}
\newtheorem*{lem*}{Lemma}

\usepackage{mathpazo} 
\usepackage[T1]{fontenc}

\usepackage{etoolbox}
\AtBeginEnvironment{thebibliography}{\small}

\begin{document}

	\maketitle
	
	\begin{abstract}
		We establish new restrictions on the virtual cohomological dimension of word hyperbolic groups admitting Anosov representations into $\PO(p, q)$ ($p > q \geq 2$) or the exceptional Lie group $\G_2'$. Our main result states that, if $P_k$ is the parabolic subgroup of $\PO(p, q)$ stabilizing an isotropic $k$-plane in $\bR^{p, q}$, a group admitting a $\{P_1,P_q\}$-Anosov representation into $\PO(p, q)$ has virtual cohomological dimension strictly less than $p$. As a result, a $\mbf{H}^{p, q-1}$-convex cocompact subgroup of $\PO(p, q)$ in the sense of \cite{DGK1}, cannot act properly discontinuously and  cocompactly on $\mbf{H}^{p, q-1}$. The main theorem also implies that a group admitting a $P_{\al}$-Anosov representation into the exceptional Lie group $\G_2'$ has virtual cohomological dimension strictly less than $4$, where $\al$ is a short restricted root of $\G_2'$. As an application, we prove that the homogeneous space $\G_2'/ \SU(2, 1)$ does not admit any compact quotient.
	\end{abstract}

	\section{Introduction}

	Anosov representations were first introduced by Labourie in his celebrated paper \cite{Lab} to provide a geometric interpretation of representations in the Hitchin components. Thanks to the work of many authors (see, for instance,  Guichard--Wienhard \cite{GW}, Kapovich--Leeb--Porti \cite{KLP, KLP18},  Guéritaud--Guichard--Kassel--Wienhard \cite{GGKW}, Bochi--Potrie--Sambarino \cite{BPS}),  the theory of Anosov representations was quickly developed. Anosov subgroups are one of the most important classes of discrete subgroups of Lie groups, whose study is related to higher rank Teichmüller theory, geometric structures, divisible convex sets, and compact quotients of homogeneous spaces. 
	
	Roughly speaking, a morphism $\rho$ from a hyperbolic group $\Ga$ to real semisimple Lie group $G$ is called a \textit{$P$-Anosov representation}, where $P$ is a parabolic subgroup, if the action of $\Ga$ on $G/P$ exhibits some hyperbolic behavior. Classifying triples $(\Ga, G, P)$ such that $\Ga$ admits a $P$-Anosov representation into $G$ has become an important question in the domain (see, for instance, \cite{RK, Kon, Dey, DGR, PT, ER, KiTr, Bla}).
	
	In this paper, we prove a new topological obstruction for the existence of such triples $(\Ga, G, P)$. For $p, q$ integers, let $G$ be the Lie group $\PO(p, q)$ and for any $k \leq \min\{p, q\}$, denote by $P_k$ the parabolic subgroup stabilizing an isotropic $k$-plane in $\bR^{p, q}$.
	
	\begin{maintheo} \label{theo po}
		Let $p > q \geq 2$ be two integers and $\Ga$ be a word hyperbolic group. If $\rho: \Ga \rtar \PO(p, q)$ is $\{P_1, P_q\}$-Anosov, then the virtual cohomological dimension of $\Ga$ is strictly less than $p$.
	\end{maintheo}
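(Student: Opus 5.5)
We will argue through the boundary maps and a cohomological-dimension count for a cocompact domain of discontinuity. A $\{P_1,P_q\}$-Anosov representation $\rho$ has continuous, $\rho$-equivariant, transverse boundary maps
\[
\xi_1:\pa\Ga\rtar \Ein^{p-1,q-1}=\IsGr_1(\bR^{p,q})
\quad\text{and}\quad
\xi_q:\pa\Ga\rtar \IsGr_q(\bR^{p,q}),
\]
compatible in the sense that $\xi_1(x)\subset\xi_q(x)$. Using the Guichard--Wienhard / Kapovich--Leeb--Porti theory of cocompact domains of discontinuity, we pick a balanced thickening and obtain an open set $\Omega\subset \Ein^{p-1,q-1}$ on which $\rho(\Ga)$ acts properly discontinuously and cocompactly. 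The natural candidate is
\[
\Omega=\Ein^{p-1,q-1}\setminus \bigcup_{x\in\pa\Ga}\bP(\xi_q(x)).
\]
Its complement $K=\bigcup_x \bP(\xi_q(x))$ is a fibration over $\pa\Ga$ with fiber $\bR\bP^{q-1}$. We pass to a torsion-free finite index subgroup, so that $M=\Omega/\Ga$ is a closed manifold of dimension $p+q-2$.

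We then compute $\mathrm{vcd}(\Ga)$ from this quotient. If $\Omega$ (or each of its components) has well-understood cohomology, then $M$ carries a fibration-like structure that gives the standard inequality. For a torsion-free $\Ga$ acting freely, properly and cocompactly on a manifold $\Omega$ of dimension $n$ with $H^*(\Omega)$ controlled, one has $\mathrm{cd}(\Ga)\le n-m$, where $m$ is the top degree with nonzero compactly supported/Čech data; this is the type of argument of Bestvina--Mess / Kapovich. Concretely, we compute the cohomology of $\Omega$ via Alexander duality in $\Ein^{p-1,q-1}\cong (S^{p-1}\times S^{q-1})/\pm1$. Here $\check H^*(K)$ is determined by the Leray spectral sequence of $K\rtar\pa\Ga$, and $\check H^{*}(\pa\Ga)$ has top degree $\mathrm{vcd}(\Ga)-1$ by Bestvina--Mess. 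If $\mathrm{vcd}(\Ga)\ge p$, then $\pa\Ga$ has covering dimension at least $p-1$. Then $K$ has dimension at least $p+q-2=\dim \Ein$, so $K$ would contain an open set; transversality should forbid this. Running the same argument with Čech cohomology (via Alexander duality, nonvanishing of $\check H^{p+q-2}(K)$ with suitable coefficients) gives a contradiction.

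The key steps, in order, are:
\begin{enumerate}
\item Establish the fibered structure of $K$ over $\pa\Ga$ using injectivity of $\xi_q$ and transversality: the projective spaces $\bP(\xi_q(x))$ are pairwise disjoint, since transversality of $\xi_q(x)$ and $\xi_q(y)$ with $q$-dimensional isotropic spaces forces trivial intersection.
\item Show $\Omega\neq\emptyset$ and that the action on $\Omega$ is cocompact; this is the step that needs $p>q$, so that the thickening is balanced or the dimension count leaves room.
\item Show $\dim K=\dim\pa\Ga+q-1<p+q-2$, i.e.\ $\dim\pa\Ga<p-1$, hence $\mathrm{vcd}(\Ga)<p$.
\end{enumerate}

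The main obstacle is the last step: ruling out $\dim K=\dim\Ein$. We plan to argue that if $\dim\pa\Ga=p-1$, then $K$ contains an open subset of $\Ein$. By equivariance and minimality of the $\Ga$-action on $\pa\Ga$, the set of points with open neighborhoods in $K$ is a nonempty $\Ga$-invariant open set. Combined with compactness of $\Omega/\Ga$, this should force $\Omega=\emptyset$, i.e.\ $K=\Ein$. The hardest part is then showing that $\Ein^{p-1,q-1}$ cannot be fibered by $\bR\bP^{q-1}$'s over a compactum when $p>q$. We will try a cohomological obstruction using $\bbZ/2$ coefficients: the class of a fiber $\bP(\xi_q(x))$ would have to pair trivially with the class of $\bP$ of a complementary isotropic $(q)$-plane in $\xi_q(y)^\perp$, but it intersects it, giving a contradiction in $H^*(\Ein;\bbZ/2)$ when $p>q$. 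In the case $p=q$ this pairing degenerates, which is consistent with the strict hypothesis $p>q$.
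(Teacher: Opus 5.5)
\textbf{There is a genuine gap, and it is structural: your argument only ever uses the $P_q$-half of the hypothesis.}

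Transversality of $\xi_q$ gives the disjoint fibration $K\rtar\pa\Ga$. Bestvina--Mess turns $\opnm{vcd}(\Ga)=p$ into ``$K$ has nonempty interior'', so your step 3 is just a restatement of the theorem. The contradiction is then supposed to come from a topological obstruction to $K$ filling $\Ein^{p-1,q-1}$. The map $\xi_1$ enters only through $\xi_1(x)\subset\xi_q(x)$, and this is never used.

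The conclusion is false for $P_q$-Anosov representations alone. Take a uniform lattice $\Ga\subset\mathrm{U}(n,1)$, $n\geq 2$, acting on $\mathbb{C}^{n,1}=\bR^{2n,2}$ with $\ob=\operatorname{Re} h$.
\begin{itemize}
\item It acts properly and cocompactly on $\mbf{H}^{2n,1}$. By the Kassel--Tholozan criterion it is therefore $P_2$-Anosov in $\PO(2n,2)$ with $\opnm{vcd}(\Ga)=2n=p>q=2$. Its singular values $e^t,e^t,1,\dots,1,e^{-t},e^{-t}$ show it is not $P_1$-Anosov.
\item Since $\ob(v,v)=h(v,v)$, every real isotropic line lies in a complex isotropic line. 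Hence $K=\bigcup_x\bP(\xi^2(x))=\Ein^{2n-1,1}$ and $\Omega=\varnothing$.
\item So the Einstein universe is fibered by the pairwise disjoint circles $\bP(\xi^2(x))$, which come from pairwise transverse isotropic planes, over $\pa\mbf{H}^n_{\mathbb{C}}\cong\mbf{S}^{2n-1}$.
\end{itemize}
Thus both your step 2 (``$\Omega\neq\varnothing$ uses $p>q$'') and your final claim (``$\Ein^{p-1,q-1}$ cannot be fibered by $\bR\bP^{q-1}$'s over a compactum when $p>q$'') are false.

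The proposed $\bbZ/2$ pairing is not meaningful either. For $p>q$, two fibers have dimension $q-1$ with $2(q-1)<p+q-2$, so they are not in complementary degrees. Moreover, $\xi_q(y)^\perp$ contains no isotropic $q$-plane other than $\xi_q(y)$, because $\xi_q(y)^\perp/\xi_q(y)$ is positive definite. The passage from ``$K$ has interior'' to ``$K=\Ein$'' via minimality is also unjustified: $\Ga$ acts minimally on $\pa\Ga$, not on $\Ein$.

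\textbf{What is missing is an essential use of the $P_1$-Anosov condition, and this is where the paper's proof lives.}
\begin{enumerate}
\item Assuming $\opnm{vcd}(\Ga)=p$, the $P_q$ condition gives a compact quotient of $\mbf{H}^{p,q-1}$. Hence $\Ga$ is one-ended and $\pa_\infty\Ga$ is connected.
\item Then $P_1$-Anosov together with \cite[Coro.~11.10]{DGK2} gives $\mbf{H}^{p,q-1}$-convex cocompactness. So $\xi^1(\pa_\infty\Ga)$ is a negative $(p-1)$-sphere. After lifting to $\hat{\Ein}^{p-1,q-1}$, it is the graph of a strictly $1$-Lipschitz map $f:\mbf{S}^{p-1}\rtar\mbf{S}^{q-1}$.
\item The pair $\xi^1\subset\xi^q$, together with transversality, makes $\rho$ $(1,1,p)$-hyperconvex. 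So the graph is $\calC^1$, with tangent space containing the isotropic $(q-1)$-plane coming from $\xi^q$.
\item Hence $df$ is isometric on a $(q-1)$-dimensional subspace. So $f$ is a submersion, hence surjective, contradicting strict $1$-Lipschitzness.
\end{enumerate}
None of this structure exists in the $\mathrm{U}(n,1)$ example, since there is no $\xi^1$ at all. This is why any argument that only sees the fibration by the $\bP(\xi^q(x))$ cannot succeed.
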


	Observe that the assumption $p > q$ is necessary: in \cite{HDRMD} Monclair constructed a $\{P_1, P_{4}\}$-Anosov representation into $\PO(4, 4)$ from a uniform lattice of $\SU(2, 1)$. 
	
		\paragraph{The exceptional Lie group $\G_2'$.}	
		
	A direct application of our main theorem concerns Anosov representations into the exceptional Lie group $\G_2'$. The group $\G_2'$ is the group of automorphism of the split octonion algebra. It is a rank $2$ subgroup of $\PO(4, 3)$ whose maximal parabolic subgroups $P_\al$ and $P_\be$ are respectively associated to the short and long restricted root. Since $P_\al$-Anosov representations into $\G_2'$ give a $\{P_1, P_3\}$-Anosov representations into $\PO(4, 3)$ (see \cite{DE}), our main theorem gives:

		\begin{coro} \label{theo g2}
			Let $\Ga$ be a discrete group admitting $P_\al$-Anosov representation in $\G_2'$, where $\al$ is a short restricted root, then the virtual cohomological dimension of $\Ga$ is at most $3$. 
		\end{coro}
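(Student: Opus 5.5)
This corollary follows from the Main Theorem once we have a $\{P_1,P_3\}$-Anosov representation into $\PO(4,3)$. We will compose with the standard inclusion $\iota: \G_2' \hookrightarrow \PO(4,3)$ and apply the Main Theorem with $p=4$, $q=3$.

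First, $\Ga$ admits an Anosov representation, so it is word hyperbolic; this is part of the definition. Next, let $\rho: \Ga \rtar \G_2'$ be $P_\al$-Anosov. Recall that $\G_2'$ acts on the imaginary split octonions $\bR^{4,3}$ and preserves the quadratic form. This gives $\iota$, and $\iota$ is injective on $\G_2'$ because the center is trivial.

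We then invoke the result of \cite{DE} quoted above: $\iota\circ\rho$ is $\{P_1,P_3\}$-Anosov.

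If one wants to see this rather than cite it, the mechanism is as follows. The flag variety $\G_2'/P_\al$ is the space of isotropic lines in $\bR^{4,3}$ that are "null" for the octonion multiplication, so it embeds $\iota$-equivariantly in $\PO(4,3)/P_1$. Also, each null line $\ell$ determines an isotropic $3$-plane, namely the annihilator $\{x : x\ell = 0\}$. This yields an equivariant map $\G_2'/P_\al \rtar \PO(4,3)/P_3$.

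Transversality is preserved under these maps. For the Cartan projection, the singular value gaps $\sigma_1/\sigma_2$ and $\sigma_3/\sigma_4$ of $\iota\circ\rho(\ga)$ are controlled by $\al(\mu(\rho(\ga)))$. This is because the weights of the $7$-dimensional representation are $0$ and the six short roots, so consecutive gaps at positions $1$ and $3$ equal the short root $\al$ evaluated on the Cartan projection. Hence the exponential gap condition at $\al$ gives the gaps at $P_1$ and $P_3$.

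Finally, the Main Theorem with $p=4>q=3\geq 2$ gives that the virtual cohomological dimension of $\Ga$ is strictly less than $4$, that is, at most $3$.

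The only substantive step is the Anosov property transferring under $\iota$. Since this is available from \cite{DE}, there is no real obstacle here. The care needed is in matching the conventions: the weight computation must identify the $P_1$ and $P_3$ gaps with the short root $\al$, and not with the long root $\be$.
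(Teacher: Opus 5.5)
Your proposal is correct and is the same argument as the paper's: compose with $\G_2' \hookrightarrow \PO(4,3)$, invoke \cite{DE} to obtain a $\{P_1,P_3\}$-Anosov representation, and apply the Main Theorem with $p=4>q=3\geq 2$. Your weight computation is a correct supplement to the citation: the simple roots $\al_1$ and $\al_3$ of $\PO(4,3)$ both restrict to the short root $\al$, while $\al_2$ restricts to the long root $\be$.
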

		
		So far, no example of such representations is known for the case $\opnm{vcd} (\Ga) = 3$. We believe that there are no such representations.

		Together, the main theorem and Corollary~\ref{theo g2} give partial answers to \cite[Question 0.20]{ER} for $\G_2'$ and $\PO(4, 3)$.

		\paragraph{Compact Clifford--Klein form.} Let $\mbf{X}$ be a reductive $G$-homogeneous space, that is, $\mbf{X}$ can be realized as the quotient of a real reductive Lie group $G$ by a reductive subgroup $H$. A \textit{compact Clifford--Klein form} (also called \textit{compact quotient}) is a pair $(\mbf{X}, \Ga)$ where $\Ga$ is a discrete subgroup of $G$ acting properly discontinuously and cocompactly on $\mbf{X}$.

		When $\mbf{X}$ is not Riemannian (equivalently, when $H$ is non-compact), an infinite discrete subgroup $\Ga$ of $G$ might not act properly discontinuous on $\mbf{X}$ (for instance, if $\Ga$ is a subgroup of $H$). Benoist \cite{Ben} and Kobayashi \cite{Ko96} independently  proved a criterion for a subgroup $\Ga$ of $G$ to act properly discontinuously on $X$. As a result, when $G$ and $H$ have the same real rank, only finite groups act properly discontinuously on $\mbf{X}$.
		
		In this paper, the reductive $\PO(p, q)$-homogeneous space we consider is the \textit{pseudo-Riemannian hyperbolic space}
	\[\mbf{H}^{p, q-1} \simeq  \PO(p, q)/  \bP \l(\O(p, q-1) \times \O(1) \r)\ .\]
Historically, the first class of compact quotient to be extensively studied was the case of $\mbf{H}^{2, 1}$ (see, for instance, \cite{Gold, KR, Kli, Sa, Mess, Ka1, GK, GKW, DT, Tho1, Tho2}).
		
		The recent work of Kassel and Tholozan \cite{KT} link the theory of compact quotients with the theory of Anosov representations. A consequence of their work is 
		\begin{crit*}[Kassel--Tholozan\cite{KT}] 
			The pair $(\mbf{H}^{p, q-1}, \Ga)$ defines a compact quotient, if and only if, $p \geq q$ and $\Ga $ is $P_q$-Anosov subgroup of $\PO(p, q)$ of virtual cohomological dimension $p$.
		\end{crit*}

	More recently, Kassel, Morita, and Tholozan \cite{KMT} obtained topological obstructions for the existence of compact quotient of $\mbf{H}^{p, q-1}$ whenever $(p, q)$ satisfies a numerical condition. Within the range not excluded by their obstruction, the smallest dimensional case is $\mbf{H}^{4,2}$: it is conjectured by Kobayashi in \cite{Kob97, Kob01} that $\mbf{H}^{4, 2}$ does not admit compact quotient.
	
	The pseudo-Riemannian hyperbolic space $\mbf{H}^{4, 2}$ admits another realization as a reductive $\G_2'$-homogeneous space: the Lie group $\G_2'$ acts transitively on $\mbf{H}^{4, 2}$ with stabilizer conjugated to $\SU(2, 1)$. As a direct consequence of Corollary~\ref{theo g2} and Kassel--Tholozan criterion, we obtain:
	\begin{coro} \label{prop po43}
		The $\G_2'$-homogeneous space $\G_2'/ \SU(2, 1)$ does not admit a compact quotient. 
	\end{coro}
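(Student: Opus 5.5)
The plan is to argue by contradiction and combine Corollary~\ref{theo g2} with the Kassel--Tholozan criterion. Suppose $\Ga \subset \G_2'$ is a discrete subgroup acting properly discontinuously and cocompactly on $\G_2'/\SU(2,1)$. First I would fix the identification of $\G_2'/\SU(2,1)$ with $\mbf{H}^{4,2}$. Recall that $\G_2'$ sits in $\SO(4,3)$ via its action on the imaginary split octonions $\bR^{4,3}$, and it acts transitively on the unit vectors of norm $+1$ (in the convention where $\mbf{H}^{p,q-1}$ is the space of positive lines in $\bR^{p,q}$, with $p=4$, $q=3$). The stabilizer of such a vector is conjugate to $\SU(2,1)$. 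Hence $\G_2'/\SU(2,1)$ is the double cover of $\mbf{H}^{4,2}$ (or $\mbf{H}^{4,2}$ itself, depending on the model), and the $\G_2'$-action is the restriction of the $\PO(4,3)$-action. Passing between the double cover and the quotient by $\pm 1$ preserves properness and cocompactness, possibly after replacing $\Ga$ by a finite-index subgroup or extension.

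Next, $\Ga$, viewed inside $\PO(4,3)$, acts properly discontinuously and cocompactly on $\mbf{H}^{4,2}$. By the Kassel--Tholozan criterion, $\Ga$ is a $P_3$-Anosov subgroup of $\PO(4,3)$ with $\opnm{vcd}(\Ga)=4$. The remaining step is to upgrade this to a statement about $\G_2'$ so that Corollary~\ref{theo g2} applies.

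This upgrade is the key point and the main obstacle. I expect to use the fact, already invoked via \cite{DE}, that the inclusion $\G_2' \hrtar \PO(4,3)$ relates the parabolics. The partial flag manifold $\G_2'/P_\al$ embeds $\G_2'$-equivariantly into the space of isotropic $3$-planes (equivalently, of isotropic lines) in $\bR^{4,3}$. Moreover, the relevant singular value gaps coincide: for $g\in\G_2'$ with Cartan projection $(a,b)$, the $\SO(4,3)$ singular values are $a, b, a+b, 0, \dots$ up to sign conventions. Both the $\mu_1-\mu_2$ gap and the $\mu_3-\mu_4$ gap of $\PO(4,3)$ are therefore controlled by the short root $\al$.

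The way I would phrase this step is that $P_3$-Anosov in $\PO(4,3)$ means linear growth of $\mu_3-\mu_4$. For elements of $\G_2'$ this quantity equals $\al(\mu(g))$. Hence $\Ga$ is $P_\al$-Anosov in $\G_2'$, using the singular value characterization of Anosov representations of \cite{KLP, BPS, GGKW}. Corollary~\ref{theo g2} then gives $\opnm{vcd}(\Ga)\le 3$, contradicting $\opnm{vcd}(\Ga)=4$.

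The only delicate check is the computation of the weights of the $7$-dimensional representation restricted to the Cartan subspace of $\G_2'$. These are the short roots together with $0$, namely $\pm\al, \pm(\al+\be), \pm(2\al+\be), 0$. Ordering them in a Weyl chamber shows that the gap between the third and fourth largest weights is a short root, which I would verify explicitly to justify the previous step.
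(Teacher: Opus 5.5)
Your proposal is correct and follows the paper's argument (identify $\G_2'/\SU(2,1)$ with $\mbf{H}^{4,2}$ up to a harmless double cover, apply the Kassel--Tholozan criterion to get a $P_3$-Anosov subgroup of $\PO(4,3)$ with $\opnm{vcd}(\Ga)=4$, and contradict Corollary~\ref{theo g2}). Your weight computation, giving $\al(\mu_{\G_2'}(g))=\mu_3(g)=\mu_1(g)-\mu_2(g)$ for $g\in\G_2'$, correctly supplies the step from $P_3$-Anosov in $\PO(4,3)$ to $P_\al$-Anosov in $\G_2'$ that the paper leaves implicit. Just fix the sign convention: with the paper's $\bR^{4,3}$, $\mbf{H}^{4,2}$ is the space of \emph{negative} lines, and it is a negative vector (orthogonal complement of signature $(4,2)$) whose stabilizer in $\G_2'$ is $\SU(2,1)$, whereas a positive vector has stabilizer $\opnm{SL}(3,\bR)$.
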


	\paragraph{Incompatibility result with $\mbf{H}^{p, q-1}$-convex cocompactness.}
	In \cite{DGK1} the authors generalize the notion of convex cocompactness to the case of $\mbf{H}^{p, q-1}$. They proved that $\mbf{H}^{p, q-1}$-convex cocompact subgroups of $\PO(p, q)$ are exactly $P_1$-Anosov subgroups whose limit sets satisfy a negativity condition.
		
		\begin{coro} \label{coro A}
		Let $p > q \geq 2$ be two integers. A discrete subgroup of $\PO(p, q)$ cannot simultaneously be $\mbf{H}^{p, q-1}$-convex cocompact and define a compact quotient of $\mbf{H}^{p, q-1}$.
		\end{coro}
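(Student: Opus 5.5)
We argue by contradiction, combining the characterization of $\mbf{H}^{p, q-1}$-convex cocompactness from \cite{DGK1}, the Kassel--Tholozan criterion, and the Main Theorem. Let $\Ga \subset \PO(p, q)$, with $p > q \geq 2$, be a discrete subgroup that is $\mbf{H}^{p, q-1}$-convex cocompact and also acts properly discontinuously and cocompactly on $\mbf{H}^{p, q-1}$.

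First, by the result of \cite{DGK1} recalled above, convex cocompactness in $\mbf{H}^{p, q-1}$ implies that $\Ga$ is word hyperbolic and that the inclusion $\Ga \hrtar \PO(p, q)$ is $P_1$-Anosov, with negative limit set. Second, since $(\mbf{H}^{p, q-1}, \Ga)$ is a compact quotient, the Kassel--Tholozan criterion shows that $\Ga$ is a $P_q$-Anosov subgroup with $\opnm{vcd}(\Ga) = p$.

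Next we combine the two Anosov conditions. A representation that is both $P_1$-Anosov and $P_q$-Anosov is $\{P_1, P_q\}$-Anosov, that is, Anosov with respect to $P_1 \cap P_q$. In terms of singular values, this says that both the gap $\mu_1 - \mu_2$ and the gap $\mu_q - \mu_{q+1}$ grow linearly in the word length. This is the standard fact that $\Th$-Anosov for a union $\Th = \Th_1 \cup \Th_2$ is equivalent to $\Th_i$-Anosov for each $i$ (see e.g.\ \cite{GGKW, KLP}). The Main Theorem then applies to the inclusion of $\Ga$ and gives $\opnm{vcd}(\Ga) < p$, which contradicts $\opnm{vcd}(\Ga) = p$.

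The only step needing care is the passage from separate $P_1$- and $P_q$-Anosov properties to $\{P_1, P_q\}$-Anosov. We handle it through the singular-value characterization of \cite{GGKW, KLP}, which makes it immediate. The case $q = 1$ is excluded by hypothesis, since there $P_1 = P_q$ and the Main Theorem is not available.
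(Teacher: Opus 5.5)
Your proposal is correct and follows the paper's proof. Convex cocompactness gives $P_1$-Anosov (Theorem~\ref{cc-Anosov}), the Kassel--Tholozan criterion gives $P_q$-Anosov with $\opnm{vcd}(\Ga)=p$, and the Main Theorem then yields the contradiction. Your one addition, spelling out that $P_1$- and $P_q$-Anosov together give $\{P_1,P_q\}$-Anosov via the uniform gap characterization, is a standard step the paper leaves implicit.
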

	
	This result was first observed in the study of $\mbf{H}^{2, 1}$. Around 2022, Kassel, Morita and Tholozan conjectured that this phenomenon holds more generally. This was communicated to the author by Morita and Tholozan in April 2026.

		For the proof of our main theorem, we formalize the following lemma concerning about the topology of the Gromov boundary, which is also of independent interest.		
		
		\begin{lem} \label{connected G/H}
			Let $G/ H$ be a connected reductive homogeneous spaces, $K_H$ a maximal compact subgroup of $H$ and $K$ a maximal compact subgroup of $G$ containing $K_H$. Assume that 
			\[\dim(G/H) - \dim(K/K_H) > 1\ .\]
If a word hyperbolic subgroup $\Ga < G$ defines a compact quotient of $G/ H$, then the Gromov boundary $\pa_\infty \Ga$ of $\Ga$ is connected.
		\end{lem}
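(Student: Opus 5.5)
The plan is to reduce the connectedness of $\pa_\infty \Ga$ to a statement about the number of ends of a contractible manifold on which $\Ga$ acts cocompactly, and then to compute that number of ends via cohomological dimension. Two standard facts drive the argument. First, a word hyperbolic group $\Ga$ has connected boundary as soon as it is one-ended: by Stallings, $\Ga$ has more than one end iff it splits over a finite subgroup, and for hyperbolic groups $\pa_\infty\Ga$ is disconnected iff $\Ga$ has more than one end (we may also assume $\Ga$ is infinite, otherwise there is no quotient since $G/H$ is noncompact when the dimension hypothesis holds). Second, the number of ends is a quasi-isometry invariant, so it suffices to exhibit a proper cocompact $\Ga$-space with one end.

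The first step is to replace $G/H$ by a contractible manifold with a cocompact action. By Mostow's fibration, $G/H$ is a vector bundle over $K/K_H$, so $G/H$ is homotopy equivalent to the compact manifold $K/K_H$. Since $\Ga$ acts properly discontinuously and cocompactly on $G/H$, it does so on $G/K_H$; this space fibers $K$-equivariantly over $G/K$ with compact fiber $K/K_H$, and the $\Ga$-action on $G/K$ is also proper and cocompact. In particular $\Ga$ is quasi-isometric to $G/K$, a Riemannian symmetric space, but this is useless directly, since we want information that depends on $H$. Instead we pass to a torsion-free finite index subgroup $\Ga'$ (Selberg) and use the closed manifold $M=\Ga'\backslash G/H$. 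Its universal cover $\widetilde{G/H}$ is homotopy equivalent to the universal cover of $K/K_H$. The cleaner route is cohomological: $\Ga'$ acts freely, properly and cocompactly on $G/H$, which is homotopy equivalent to the closed manifold $K/K_H$. A standard spectral-sequence argument, as in Kobayashi and Kassel--Tholozan, then gives $\opnm{vcd}(\Ga)=\dim(G/H)-\dim(K/K_H)=:n$, and $\Ga'$ is a Poincaré duality group of dimension $n$. In more detail, $M$ is a closed manifold fibering up to homotopy with fiber $K/K_H$ over $B\Ga'$, and Poincaré duality for $M$ and for the fiber yields duality for $\Ga'$.

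The key step is that a PD group of dimension $n$ satisfies $H^k(\Ga',\bbZ\Ga')=0$ for $k\neq n$ and $H^n(\Ga',\bbZ\Ga')\cong\bbZ$. Recall that the number of ends $e(\Ga')$ is governed by $H^1(\Ga',\bbZ\Ga')$: $\Ga'$ has one end iff $H^1(\Ga',\bbZ\Ga')=0$ and $\Ga'$ is infinite. The hypothesis $n>1$ gives exactly $H^1=0$, so $\Ga'$, hence $\Ga$, is one-ended, and $\pa_\infty\Ga$ is connected by the first paragraph.

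The main obstacle I expect is rigorously proving that $\Ga'$ is a PD$^n$ group, or at least that $H^1(\Ga',\bbZ\Ga')=0$, from the fact that it acts freely and cocompactly on a space homotopy equivalent to the compact manifold $K/K_H$, which need not be simply connected. I would first handle this through the cover $\Ga'\backslash G/K_H\to\Ga'\backslash G/K$, which lets one work with the torsion-free group acting on the contractible $G/K$. Alternatively, I would compute compactly supported cohomology of $G/H$ via the Leray--Serre spectral sequence: $H^*_c(G/H)$ equals the cohomology of the vector bundle over $K/K_H$ with fiber rank $n$, so it vanishes below degree $n$. Then $H^k(\Ga',\bbZ\Ga')\cong H^k_c$ of a cocompact $\Ga'$-space, after correcting for the fiber $K/K_H$ being nontrivial. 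Making this comparison precise via the Thom isomorphism $H^k_c(G/H)\cong H^{k-n}(K/K_H)$, twisted if non-orientable, should give $H^1(\Ga',\bbZ\Ga')=0$ once $n\ge 2$.
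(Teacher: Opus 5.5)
There is a genuine gap, and it sits exactly where the content of the lemma lies. Your outer reduction agrees with the paper: for infinite hyperbolic $\Ga$, connected boundary $\Leftrightarrow$ one end, and ends are a quasi-isometry invariant. But the step meant to produce one-endedness, namely that $\Ga'$ is a $\mathrm{PD}^n$ group or at least $H^1(\Ga',\bbZ\Ga')=0$, is only asserted (``a standard spectral-sequence argument'', ``should give''). You flag it yourself as the main obstacle.

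One of the routes you suggest also rests on a false claim: $\Ga$ does \emph{not} act cocompactly on $G/K_H$ or on $G/K$. The map $\Ga\backslash G/K_H\to\Ga\backslash G/H$ has fiber $H/K_H$, which is noncompact unless $H$ is compact. A cocompact action on $G/K$ would make $\Ga$ a uniform lattice with $\opnm{vcd}(\Ga)=\dim G/K$. This already fails for the surface groups giving compact quotients of $\mbf{H}^{2,1}$, where $\dim G/K=4$ but $\opnm{vcd}=2$. So $\Ga$ is not quasi-isometric to $G/K$, and the non-cocompact action on the contractible $G/K$ cannot compute $H^*(\Ga',\bbZ\Ga')$.

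Your other route can be completed. For the free cocompact action of $\Ga'$ on $X=G/H$, the Cartan--Leray spectral sequence $E_2^{p,q}=H^p(\Ga',H^q(X;\bbZ\Ga'))\Rightarrow H^{p+q}_c(X)$ has $E_2^{1,0}=H^1(\Ga',\bbZ\Ga')$ surviving to $E_\infty$ and injecting into $H^1_c(X)$. That group vanishes by the Thom isomorphism for the rank-$n$ bundle $K\times_{K_H}(\fp\cap\fq)\to K/K_H$ when $n\ge 2$. As written, however, this is not carried out.

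More to the point, the detour is unnecessary, and you miss the idea that makes the paper's proof short. You correctly say it suffices to exhibit a proper cocompact $\Ga$-space with one end; that space is $G/H$ itself. Pulling back a Riemannian metric from the closed manifold $\Ga'\backslash G/H$ gives a complete $\Ga'$-invariant Riemannian metric on $G/H$. Milnor--\caron{S}varc then makes $\Ga$ quasi-isometric to $G/H$ with this metric. The Mostow--Kobayashi model $G/H\cong K\times_{K_H}(\fp\cap\fq)$ shows directly that $G/H$ is one-ended: the complement of each compact tube $K\times_{K_H}\{\V{v}\le R\}$ is a bundle over the connected base $K/K_H$ with fiber $\{\V{v}>R\}$, which is connected because $\dim(\fp\cap\fq)=n\ge 2$. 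No Poincaré duality or group cohomology is needed. Even in your cohomological language, $H^1_c(G/H)=0$ already gives one-endedness of $G/H$ without passing through $H^1(\Ga',\bbZ\Ga')$.
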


		The assumption ``word hyperbolic'' is automatically satisfied in corank-$1$ case by \cite[Prop.~1.13]{KT}, consequently, we have the following corollary.
		\begin{coro} \label{connected}
			Let $p\geq q \geq 2$ be two integers. If $\rho: \Ga \rtar \PO(p, q)$ defines a compact quotient of $\mbf{H}^{p, q-1}$, then the Gromov boundary $\pa_\infty \Ga$ of $\Ga$ is connected. 
		\end{coro}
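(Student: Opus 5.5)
The statement to prove is Corollary~\ref{connected}. The plan is to reduce it to Lemma~\ref{connected G/H}, applied with $G = \PO(p, q)$ and $H = \bP(\O(p, q-1) \times \O(1))$, so that $G/H \simeq \mbf{H}^{p, q-1}$. This requires three things: identifying the maximal compact subgroups, checking the dimension inequality, and verifying that $\Ga$ is word hyperbolic.

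For the compact subgroups, take $K = \bP(\O(p) \times \O(q))$, a maximal compact subgroup of $\PO(p,q)$. Inside it, take $K_H = \bP(\O(p) \times \O(q-1) \times \O(1))$, a maximal compact subgroup of $H$, so that $K_H \subset K$. The dimensions are then:
\[
\dim(G/H) = p+q-1, \qquad \dim(K/K_H) = \dim \O(q) - \dim \O(q-1) = q-1 .
\]
Their difference is $p$. Since $p \geq q \geq 2$, we have $p > 1$, and the hypothesis of Lemma~\ref{connected G/H} holds. The space $\mbf{H}^{p,q-1}$ is connected: it is a quadric $\{x : \langle x, x\rangle = -1\}/\pm$ in $\bR^{p,q}$, homotopy equivalent to $S^{q-1}/\pm$, which is connected for $q \geq 1$. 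Equivalently, $G/H$ is connected because the identity component of $\PO(p,q)$ already acts transitively.

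Word hyperbolicity comes from \cite[Prop.~1.13]{KT}. The space $\mbf{H}^{p,q-1}$ has corank $1$, since $\opnm{rank}_\bR H = q-1 = \opnm{rank}_\bR G - 1$. In this setting that result says that any discrete group acting properly discontinuously and cocompactly on $\mbf{H}^{p,q-1}$ is word hyperbolic; equivalently, by the Kassel--Tholozan criterion, it is $P_q$-Anosov. So $\rho(\Ga)$ is a word hyperbolic subgroup of $G$ defining a compact quotient.

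One point needs care. If $\rho$ has a finite kernel, then $\Ga$ and $\rho(\Ga)$ are quasi-isometric, so their Gromov boundaries are homeomorphic and it suffices to treat $\rho(\Ga)$. Properness of the action forces the kernel to be finite. Applying Lemma~\ref{connected G/H} to $\rho(\Ga)$ then gives that $\pa_\infty\Ga$ is connected.

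The only step requiring genuine care is the compact-subgroup bookkeeping, specifically the choice of $K_H \subset K$ compatible with the projectivization. Everything else is an immediate substitution into Lemma~\ref{connected G/H} and \cite[Prop.~1.13]{KT}.
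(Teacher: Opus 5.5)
Your proposal is correct and follows the paper's route: apply Lemma~\ref{connected G/H} to $\PO(p,q)/\bP(\O(p,q-1)\times\O(1))$, with word hyperbolicity supplied in corank one by \cite[Prop.~1.13]{KT}. You also make explicit what the paper leaves implicit, namely the count $\dim(G/H)-\dim(K/K_H)=p>1$, the connectedness of $\mbf{H}^{p,q-1}$, and the reduction from $\Ga$ to $\rho(\Ga)$ via the finite kernel.
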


		
		\paragraph{Outline of the proof of the main Theorem.}
					
		Suppose, toward a contradiction, that $\opnm{vcd}(\Ga)=p$. The $P_q$-Anosov condition and Kassel--Tholozan criterion imply that $\Gamma$ defines a compact quotient of $\mbf{H}^{p, q-1}$. Corollary~\ref{connected}  shows that $\pa_\infty\Ga$ is connected. Since $p>q$, the $P_1$-Anosov assumption and \cite[Coro.~11.10]{DGK2} imply that the representation is $\mbf{H}^{p, q-1}$-convex cocompact. Its limit set in the boundary is therefore negative; maximal dimension makes it a topological $(p-1)$-sphere. The negativity of the limit set induces a strictly $1$-Lipschitz map $f:\mbf{S}^{p-1}\rtar \mbf{S}^{q-1}$ by \cite{SST, BK}. 
		
		The key observation is that such a representation is $(1, 1, p)$-hyperconvex in the sense of Pozzetti--Sambarino--Wienhard \cite{PSW} and Zhang--Zimmer \cite{ZZ}. As a result, we prove that the map $f$ is a $\calC^1$-submersion and hence is surjective.  The surjectivity contradicts the strict $1$-Lipschitz property.

		\paragraph{Organization of the paper.}
		Section~\ref{Pre} is devoted to give some preliminaries required in our paper. First we recall some basics of Lie algebra and a characterization of Anosov representation. Then we recall some results about compact quotient of homogeneous space and prove Lemma~\ref{connected G/H}. Finally we recall the definition of pseudo-Riemannian hyperbolic space $\mbf{H}^{p, q-1}$ and the notion of $\mbf{H}^{p, q-1}$-convex cocompactness. After the preliminaries, we give the proof of our main theorem.

		\paragraph{Acknowledgements.}
		
		The author is profoundly grateful to their advisor, Jérémy Toulisse, for his constant guidance and unwavering support. Special thanks are due to Yosuke Morita and Nicolas Tholozan for their valuable input in April 2026. The author thanks Fanny Kassel for her interest in our work. The author also thanks Siwei Liang, who pointed out that Coro.~\ref{connected} could be generalized to Lem.~\ref{connected G/H}. The author also thanks Max Riestenberg for stimulating discussions about Lie algebra. In addition, the author benefited from the workshops held in Hefei (April 2026) and Hohenwart (August 2026), as well as a visit to the Max Planck Institute for Mathematics in the Sciences in Leipzig (May 2026), and thanks the respective organizers for their hospitality. The author acknowledges the support of the Institut Universitaire de France through the fellowship of Jérémy Toulisse.
		
		\paragraph{Declaration of AI use.}
		
		During the preparation, the author utilized OpenAI’s ChatGPT and Google’s Gemini for clarifying background concepts, exploratory proof assistance, and language polishing. Regarding proof assistance, ChatGPT-5.6 was consulted for brainstorming ideas on Coro.~\ref{connected}; although the model's suggested proof was mathematically incorrect, the interaction provided heuristic insights that helped the author formulate the proof of Coro.~\ref{connected} and further generalize the argument to establish Lem.~\ref{connected G/H}. Additionally, ChatGPT-5.6 assisted in identifying relevant background literature concerning the example of $\SU(2, 1)$.

	All core proof strategies and essential arguments were substantially developed and written by the author, with minimal assistance from AI tools. The author has thoroughly verified all derivations and assumes full responsibility for the entirety of this work.

		\section{Preliminaries and preparatory results} \label{Pre}
		
		The purpose of this section is to recall basic definitions and properties of Lie algebras, Anosov representations, compact quotients of reductive homogeneous spaces and $\mbf{H}^{p, q-1}$-convex cocompactness. We record the proof of Lemma~\ref{connected G/H} in Section~\ref{tech lem}.
				
		\subsection{Basics of Lie algebras} \label{Lie}
		We follow closely the exposition in \cite{GGKW, KT, KMT}. Let $G$ be a noncompact connected real linear reductive Lie group with a Cartan involution $\th$. The fixed-point subgroup $K:= G^\th = \{g \in G: \th(g) = g\}$ is a maximal compact subgroup of $G$. We use the same symbol $\th$ for the involution on $\fg$ induced by the Cartan involution. We have the Cartan decomposition $\fg = \fk \oplus \fp$, where 
			\[\fk = \{X\in \fg: \th(X) = X\}\ , \quad  \fp := \{X \in \fg: \th(X) = - X\}\ .\]
Note that $\fk$ is the Lie algebra of $K$ defined above.

A Cartan subspace $\fa \subset \fp$ is a maximal abelian subspace of $\fp$. For each linear functional $\al \in \fa^*$, define the restricted root space
		\[\fg_{\alpha} := \{X\in \fg: \ad_H X = \al(H) X, \ \forall H \in \fa\} \ .\]
The set of restricted roots is 
		\[\Sigma := \{ \al \in \fa^* \setminus \{0\}: \fg_{\alpha} \neq 0\}\ .\]
This gives the restricted root space decomposition $\fg = \fg_0 \oplus \bigoplus\li_{\al \in \Sigma} \fg_{\al}$, where $\fg_0$ is the centralizer of $\fa$ in $\fg$.

		Choose a system $\Sigma^+ \subset \Sigma$ of positive restricted roots, so that $\Sigma = \Sigma^+ \sqcup (-\Sigma^+)$. Let $\De \subset \Sigma^+$ be the corresponding set of simple restricted roots. Every positive restricted root is then a nonnegative integral linear combination of elements of $\De$. The associated closed positive Weyl chamber is
		\[\Over{\fa}^+ := \{Y \in \fa: \al(Y) \geq 0, \  \forall \ \al \in \Sigma^+\}\ .\]

		Given a subset of simple restricted roots $\Th \subset \De$, define $P_\Th$ and $P_\Th^-$ to be the normalizers in $G$ of the Lie algebra
		\[\fu_\Th = \bigoplus\li_{\al \in \Sigma_\Th^+} \fg_\al  \quad \text{and} \quad  \fu_\Th^- = \bigoplus\li_{\al \in \Sigma_\Th^+} \fg_{-\al}  \ ,\]
where $\Sigma_{\Th}^+ = \Sigma^+ \setminus  \Span (\De \setminus \Th)$. The group $P_\Th$ and $P_{\Th}^-$ are parabolic subgroups of $G$ with Lie algebra
		\[\fg_0 \oplus  \bigoplus\li_{\al \in \Sigma^+} \fg_\al \oplus  \bigoplus\li_{\al \in \Sigma^+ \setminus \Sigma_\Th^+} \fg_{-\al} \quad \text{and} \quad \fg_0 \oplus  \bigoplus\li_{\al \in \Sigma^+} \fg_{-\al} \oplus  \bigoplus\li_{\al \in \Sigma^+ \setminus \Sigma_\Th^+} \fg_{\al} \ .\]
In particular $P_{\varnothing} = G$ and $P_\De$ is a minimal parabolic subgroup of $G$.

		Recall the Cartan decomposition $G = K (\exp \Over{\fa}^+) K$ as follows: for any $g\in G$, it can be written as $g = k (\exp \mu(g)) k'$ for some $k, k' \in K$ and a unique $\mu(g) \in \Over{\fa}^+$. The resulting map
		\[\mu: G \rtar \Over{\fa}^+, \quad g \mapsto \mu(g)\ ,\]
is called the \textit{Cartan projection}.

		\begin{defn} \label{para}
		We say that two parabolic subgroups $x$ and $y$ of $G$ are \textit{compatible} (resp. \textit{transverse}) if their intersection is parabolic (resp. reductive).
		\end{defn}

		\paragraph{Example $\PO_0(p, q)$ for $p > q$ two positive integers.}
		
		\begin{defn}
			We fix integers $p, q \geq 1$ and consider a symmetric bilinear form $\ob$ of signature $(p, q)$ on $\bR^{p+q}$. We denote by $\bR^{p, q}$ the space $\bR^{p+q}$ endowed with $\ob$.
		
			For a subspace $V \subset \bR^{p, q}$, we say $V$ is \textit{isotropic} if $V \subset V^\perp$; we say $V$ is \textit{coisotropic} if $V^\perp$ is isotropic. Denote by $\IsGr_k \bR^{p, q}$ the set of isotropic $k$-planes in $\bR^{p, q}$ and by $\CoIsGr_{p+q-k} \bR^{p, q}$ the set of coisotropic $(p+q-k)$-planes in $\bR^{p, q}$. 
			
		\end{defn}

By definition, there is a natural isomorphism from $\IsGr_k \bR^{p, q}$ to $\CoIsGr_{p+q -k} \bR^{p, q}$ by sending $V$ to $V^\perp$. Consider an \textit{ordered Witt basis} of $\bR^{p, q}$, namely, 
			\[(e_1, \ldots, e_q, l_1, \ldots, l_{p-q}, f_q, \ldots, f_1) \ ,\]
satisfying $\ob(e_i, f_j) = \de_{ij}$, $\ob(e_i, e_j) =  \ob(f_i, f_j) = \ob(e_i, l_s) = \ob(f_i, l_s) = 0$ and $\ob(l_s, l_t) = \de_{st}$ where $1\leq i, j \leq q$, $1\leq s, t \leq p-q$. Define, for $1\leq k \leq q$,
			\[E_k := \Span\{e_1, \ldots, e_k\} \quad \text{and} \quad F_k := \Span\{f_1, \ldots, f_k\} \ . \]
Then both $E_k$ and $F_k$ are isotropic, hence their orthogonal complements are coisotropic:
			\[E_k^\perp = \Span\{e_1, \ldots, e_q, l_1, \ldots, l_{p-q}, f_q, \ldots, f_{k+1}\} \ ;\]
			\[F_k^\perp = \Span\{f_1, \ldots, f_q, l_1,\ldots, l_{p-q}, e_q, \ldots, e_{k+1}\} \ . \]

		\begin{defn}
			Define $\O(p, q)$ as the orthogonal group of $\bR^{p, q}$ with respect to the bilinear form $\ob$. Define $\SO(p, q) := \{A \in \O(p, q): \det(A) = 1\}$ and $\PO(p, q) := \O(p, q) / \{\pm I_{p+q}\}$. Denote $\SO_0(p, q)$ (resp. $\PO_0(p, q)$) as identity component of $\SO(p, q)$ (resp. $\PO(p, q)$).
		\end{defn}

		Fix an ordered Witt basis, with respect to which we denote by $B$ the matrix representing $\ob$. Denote by $J_q$ the $q$ by $q$ anti-diagonal matrix with all the anti-diagonal entries to be $1$. Then the Lie algebra of $G = \PO_0(p, q)$ is
		\[\fg = \so(p, q) := \{X \in \gl(\bR^{p, q}): X^T B + B X = 0\} \ ,\text{ where } B = \begin{pmatrix} 0 &0 &J_q\\ 0 &I_{p-q} &0\\ J_q &0 &0\end{pmatrix}\ .\]
Set $K$ as the maximal compact subgroup with Lie algebra 
	\[\fk = \{X\in \so(p, q): X^T = - X\} \simeq \so(p) \oplus \so(q)\ .\]

The standard Cartan subspace is 
		\[\fa = \l\{ H(t) = \opnm{diag} (t_1, \ldots, t_q, 0, \ldots, 0, -t_q, \ldots, -t_1): t_1, \ldots, t_q \in \bR \r\} \ .\]
Define $\varep_i \in \fa^*$ such that $\varep_i(H(t)) = t_i$ for $i = 1, \ldots, q$. The restricted root system is of type $B_q$:
		\[\Sigma = \{ \pm (\varep_i + \varep_j): 1 \leq i < j \leq q\} \cup  \{ \pm (\varep_i - \varep_j): 1 \leq i < j \leq q\} \cup \{\pm \varep_i: 1\leq i \leq q\}\ .\]
A positive system is
		\[\Sigma^+ = \{ \varep_i - \varep_j: 1\leq i < j \leq q\} \cup \{ \varep_i + \varep_j: 1\leq i< j \leq q\} \cup \{\varep_i: 1 \leq i \leq q\}\ .\]
The simple restricted roots are $\al_i := \varep_i - \varep_{i+1}$ for $1\leq i < q$ and $\al_q := \varep_q$. Hence 
		\[\Over{\fa}^+ = \{H(t): t_1 \geq t_2 \geq \cdots \geq t_q \geq 0\}\ .\]

		For $\Th = \{\al_k\}$ for $1\leq k \leq q$, we say $P_k = \Stab_G (E_k)$ and $P_k^- = \Stab_G(F_k)$. Therefore we get isomorphisms
		\[G/P_k \simeq (\IsGr_k \bR^{p, q}, E_k)\ , \quad G/ P_k^- \simeq (\IsGr_k \bR^{p, q}, F_k) \ .\]
Two isotropic planes $V\in \IsGr_i \bR^{p, q}$ and $W \in \IsGr_j \bR^{p, q}$, where $1\leq i, j \leq q$, are compatible (resp. transverse) if and only if $V \subset W$ or $W \subset V$ (resp. $V \oplus W^\perp = \bR^{p, q}$).

		\subsection{$P_\Th$-Anosov representations}
		We use the characterization of $P_\Th$-Anosov representations taken from \cite{GGKW}.

		\begin{theo}[{\cite[Th.~1.3]{GGKW}}] \label{theo 1 Anosov}
			Let $\Ga$ be a word hyperbolic group, $G$ a real reductive Lie group, and $\Th \subset \De$ a nonempty subset of the simple restricted roots of $G$. For any representation $\rho: \Ga \rtar G$, the following conditions are equivalent:			
			\begin{itemize}
				\item[(1)] $\rho$ is $P_\Th$-Anosov;
				
				\item[(2)] There exist continuous, $\rho$-equivariant, dynamics-preserving, and transverse maps $\xi^+: \pa_\infty \Ga \rtar G/ P_\Th$ and $\xi^-: \pa_\infty \Ga \rtar G/ P_\Th^-$, and constants $c, C > 0$ such that $\inp{\al, \mu(\rho(\ga))} \geq c \v{\ga} - C$ for all $\al \in \Th$ and $\ga \in \Ga$.
			
				\item[(3)] There exist $\kappa, \kappa' > 0$ such that for any $\al \in \Sigma_{\Th}^+$ and any geodesic ray $(\ga_n)_{n\in \bbN}$ with $\ga_0 = e$ in the Cayley graph of $\Ga$, the sequence $(\inp{\al, \mu(\rho(\ga_n))})_{n\in \bbN}$ satisfies
				\[\inp{\al, \mu(\rho(\ga_{n+m}))} - \inp{\al, \mu(\rho(\ga_n))} \geq \kappa m - \kappa'\ .\]
			\end{itemize}
		\end{theo}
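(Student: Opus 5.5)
We would prove the three implications cyclically, $(1)\Rightarrow(2)\Rightarrow(3)\Rightarrow(1)$, working throughout with a faithful linear representation of $G$ and the Cartan decomposition $g=k\exp(\mu(g))k'$. For each $g$ with $\inp{\al,\mu(g)}>0$ for all $\al\in\Th$, set $U_\Th(g):=kP_\Th\in G/P_\Th$, the ``attracting flag'' of $g$. The basic tool is a standard linear-algebra estimate. If $g$ has large $\Th$-gaps, then $g$ contracts the complement of a neighbourhood of the repelling flag toward $U_\Th(g)$ at rate $e^{-\min_{\al\in\Th}\inp{\al,\mu(g)}}$. Moreover, $d\big(U_\Th(g),U_\Th(gh)\big)\lesssim e^{-\min_{\al\in\Th}\inp{\al,\mu(g)}}$ whenever $\mu(h)$ is bounded.

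For $(1)\Rightarrow(2)$, we would take Labourie's definition: an equivariant pair of continuous transverse maps, together with uniform contraction of the associated bundle over the geodesic flow space of $\Ga$. The maps $\xi^\pm$ are the given ones, and dynamics preservation follows from the contraction. To get the linear gap, we compare flow time with word length via Švarc--Milnor on the flow space. Uniform contraction of $\Hom(\mathfrak g/\mathfrak p^-, \mathfrak p^-)$-type bundles then says that the singular value ratios indexed by $\al\in\Th$ grow exponentially along flow lines. Transporting this to $\rho(\ga)$ gives $\inp{\al,\mu(\rho(\ga))}\ge c\v{\ga}-C$.

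For $(2)\Rightarrow(3)$, we write $\ga_{n+m}=\ga_n\cdot(\ga_n^{-1}\ga_{n+m})$ and prove a coarse additivity statement. If the repelling flag of $\rho(\ga_n)$ and the attracting flag of $\rho(\ga_n^{-1}\ga_{n+m})$ are uniformly transverse, then $\mu(\rho(\ga_{n+m}))=\mu(\rho(\ga_n))+\mu(\rho(\ga_n^{-1}\ga_{n+m}))+O(1)$ in the $\Th$-directions. Uniform transversality would come from the boundary maps. Hypothesis (2) forces $U_\Th(\rho(\ga))$ to be close to $\xi^+$ of the endpoint of a ray through $\ga$, and similarly for the repelling flag and $\xi^-$. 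For a geodesic, the backward endpoint of $\ga_n^{-1}\cdot(\text{ray})$ and its forward endpoint are distinct points at uniform Gromov-product distance. Continuity, transversality of $\xi^\pm$, and compactness of such pairs then give a uniform transversality constant.

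Linear growth for $\al\in\Th$ then follows from the linear lower bound applied to $\ga_n^{-1}\ga_{n+m}$, which has word length $m$. To pass to all $\be\in\Sigma_\Th^+$, we write $\be=\sum n_i\al_i$ with $n_i\ge0$ and some $\al\in\Th$ having $n_\al\ge1$. On $\Over{\fa}^+$ this gives $\inp{\be,\cdot}\ge\inp{\al,\cdot}$, applied to the $\mu$ of the increment before using additivity. We expect this additivity/uniform transversality step to be the main obstacle. The first thing we would try is to establish it in the linear model, for a proximal representation associated to $\Th$, via Plücker-type embeddings.

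For $(3)\Rightarrow(1)$, we define $\xi^+(x)=\lim_n U_\Th(\rho(\ga_n))$ along a ray $(\ga_n)$ converging to $x$. The estimate above together with (3) shows that consecutive terms are $e^{-\kappa n}$-close, so the limit exists, converges exponentially, and is independent of the ray up to fellow-travelling. This gives continuity and equivariance; $\xi^-$ is defined symmetrically using $P_\Th^-$. Transversality follows from growth along bi-infinite geodesics via the same estimates. Exponential contraction on the flow space is read off from (3) along flow lines, which yields Labourie's definition and closes the cycle.
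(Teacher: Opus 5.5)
The paper does not prove this statement; it is quoted from \cite[Th.~1.3]{GGKW}, so your outline has to stand on its own. The architecture is the standard one: Cartan attractors $U_\Theta$, exponential convergence along rays, comparison with the flow. At this level of detail, $(1)\Rightarrow(2)$ is fine, and so is the construction, continuity, equivariance and dynamics preservation of $\xi^\pm$ in $(3)\Rightarrow(1)$. However, the two steps that carry the real content are asserted rather than proved.

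\emph{Transversality in $(3)\Rightarrow(1)$.} It does not follow ``via the same estimates''.
\begin{itemize}
\item The estimate $d(U_\Theta(g),U_\Theta(gh))\lesssim e^{-\min_{\alpha\in\Theta}\langle\alpha,\mu(g)\rangle}$ controls the forward limit $\xi^+(x)$ and the backward limit $\xi^-(y)$ separately. It says nothing about their relative position.
\item The natural link between them is the products $\rho(\gamma_{-n})^{-1}\rho(\gamma_m)$, whose gaps (3) bounds from below. But a large gap of a product does not force the attracting flag of the right factor off the repelling flag of the left one.
\item Example in $\mathrm{GL}_3(\bR)$: take $g=\operatorname{diag}(\sigma,1,\varepsilon)$ and $h=\operatorname{diag}(s,\varepsilon t,t)$ with $\sigma>1>\varepsilon$ and $\varepsilon t<s<t$. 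Then $gh=\operatorname{diag}(\sigma s,\varepsilon t,\varepsilon t)$. The top direction $e_3$ of $h$ lies in the repelling plane $\langle e_2,e_3\rangle$ of $g$, and $U_1(gh)=e_1\neq g\,e_3$. Yet $\frac{\sigma_1}{\sigma_2}(gh)=\frac{\sigma s}{\varepsilon t}$ exceeds $\frac{\sigma_1}{\sigma_2}(g)\cdot\frac{\sigma_1}{\sigma_2}(h)=\frac{\sigma t}{s}$ as soon as $\varepsilon<(s/t)^2$.
\end{itemize}
So no lower bound on gaps of single products can detect transversality. You need an argument that exploits the uniformity of (3) over all rays and all scales, for instance a cone-field/dominated-splitting argument in the spirit of Bochi--Gourmelon (as used in \cite{BPS}), or an antipodality argument as in \cite{KLP}. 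Your final step (``contraction read off from (3)'') also depends on this. It needs \emph{uniform} transversality in order to write $\rho(\gamma_t)=g_0\ell_tg_t^{-1}$ with $\ell_t\in P_\Theta\cap P_\Theta^-$ and $g_0,g_t$ in a fixed compact set.

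\emph{Cartan attractors versus $\xi^+$ in $(2)\Rightarrow(3)$.} You flag this as the obstacle, but Pl\"ucker embeddings only reduce the linear algebra. They do not explain why $U_\Theta(\rho(\gamma))$ should be close to $\xi^+$.
\begin{itemize}
\item Equivariance gives $\rho(\gamma_n)\xi^+(z)=\xi^+(\gamma_nz)\to\xi^+(x)$ for $z$ away from the repelling point. This identifies $\lim U_\Theta(\rho(\gamma_n))$ with $\xi^+(x)$ only if some $\xi^+(z)$ is transverse to the limiting repelling flag of $\rho(\gamma_n)$.
\item Nothing in (2) guarantees that a priori: in the linear model, for reducible $\rho$ the curve $\xi^+(\partial_\infty\Gamma)$ can lie in a hyperplane. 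The only way out is to know that this repelling flag is $\xi^-(y)$, which is the dual form of the same claim.
\item One possible fix: use the decomposition $\rho(\gamma_t)=g_0\ell_tg_t^{-1}$, which is legitimate here because (2) includes transversality. Since $\ell_{t+1}\in\ell_t\cdot(\text{compact})$ and the $\Theta$-gaps grow linearly, the regime of $\ell_t$ is eventually constant along each geodesic. By the regime we mean whether $\ell_t$ most expands the $\xi^+$-line or a direction in the $\xi^-$-hyperplane. This regime is also locally constant and $\Gamma$-invariant on $\partial_\infty\Gamma^{(2)}$. It is correct on axes of infinite-order elements by dynamics preservation, hence correct everywhere by density.
\end{itemize}
This essentially proves $(2)\Rightarrow(1)$ directly, so the detour through (3) saves nothing.

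\emph{The additivity statement.} As an equality in the coordinates $\langle\alpha,\cdot\rangle$, your ``coarse additivity'' is false. Under transversality only the lower bound holds; additivity does hold for the fundamental weights $\omega_\alpha$. For $\beta\in\Sigma_\Theta^+\setminus\Theta$, even superadditivity fails, so ``apply $\beta\ge\alpha$ to the increment, then additivity'' is invalid. What is true, and sufficient, is
\[\langle\beta,\mu(gh)\rangle-\langle\beta,\mu(g)\rangle\ge\langle\alpha,\mu(h)\rangle-C.\]
In $\mathrm{GL}_d$ with $\Theta=\{\alpha_1\}$ this follows from two bounds. First, $\sigma_1(gh)\gtrsim\sigma_1(g)\sigma_1(h)$. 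Second, $\sigma_j(gh)\lesssim\sigma_j(g)\sigma_2(h)$, which comes from min--max using transversality.
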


		\begin{lem}[{\cite[Lem.~8]{Se}}] \label{Selberg}
			A finitely generated group of matrices over a field of characteristic $0$ has a torsion-free subgroup of finite index.
		\end{lem}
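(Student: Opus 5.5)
This is Selberg's lemma, cited from \cite{Se}. I would therefore sketch the classical argument rather than reprove anything from the paper. The plan has three steps: reduce to a finitely generated domain, find primes of residue characteristic large enough that torsion cannot reduce to the identity, and pull back the kernel of reduction modulo such a prime.

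First, let $\Ga < \opnm{GL}_n(k)$ be generated by $g_1, \ldots, g_m$, where $k$ has characteristic $0$. Let $A \subset k$ be the subring generated over $\bbZ$ by the entries of the $g_i$ and $g_i\inv$. Then $A$ is a finitely generated $\bbZ$-algebra, it is an integral domain of characteristic $0$, and $\Ga < \opnm{GL}_n(A)$. For any maximal ideal $\mathfrak m \subset A$, the residue field $A/\mathfrak m$ is finite; this is the Nullstellensatz for finitely generated $\bbZ$-algebras. Consequently the congruence kernel
\[\Ga(\mathfrak m) := \Ker\l(\Ga \rtar \opnm{GL}_n(A/\mathfrak m)\r)\]
has finite index in $\Ga$.

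Second, I would choose $\mathfrak m$ so that $\Ga(\mathfrak m')$ is torsion-free for a suitable power $\mathfrak m'$ of $\mathfrak m$. Suppose $g \in \Ga(\mathfrak m)$ has prime order $\ell$. Then $g = I + X$ with $X \neq 0$ having entries in $\mathfrak m$. Expanding $g^\ell = I$ gives
\[\ell X + \binom{\ell}{2} X^2 + \cdots = 0\ .\]
Pass to the localization–completion $\hat A_{\mathfrak m}$, or to a discrete valuation ring dominating $A_{\mathfrak m}$, with valuation $v$; such a valuation exists since $A$ is Noetherian. Let $r = \min v(X_{ij}) > 0$. If $\ell \notin \mathfrak m$, the identity above forces $v(\ell X) \geq 2r$, which is impossible. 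So $\ell$ must equal $p = \opnm{char}(A/\mathfrak m)$. Using the same expansion, $v(pX) \geq \min(2r, pr)$ forces $r \leq v(p)$.

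Hence it suffices to pick two maximal ideals $\mathfrak m_1, \mathfrak m_2$ with distinct residue characteristics $p_1 \neq p_2$. These exist because $A \otimes \mathbb{Q} \neq 0$, so $A$ has maximal ideals over infinitely many primes; this is a generic-freeness argument. Then take $\Ga' = \Ga(\mathfrak m_1^{N}) \cap \Ga(\mathfrak m_2^{N})$ with $N$ larger than the relevant valuations of $p_1$ and $p_2$. A torsion element of prime order $\ell$ in $\Ga'$ would need $\ell = p_1$ and $\ell = p_2$, which is a contradiction. Since every nontrivial torsion element has a power of prime order, $\Ga'$ is torsion-free. It has finite index because each $A/\mathfrak m_i^N$ is finite.

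The main obstacle is the commutative-algebra input. One needs that quotients of $A$ by maximal ideals are finite, and that there are maximal ideals of at least two distinct residue characteristics together with a discrete valuation controlling $\mathfrak m$-adic orders. I would handle this by Noether normalization over $\bbZ[1/s]$ for a suitable integer $s$, which shows that $A[1/s]$ is integral over a polynomial ring over $\bbZ[1/s]$. Maximal ideals lying over every prime not dividing $s$ then follow from lying-over.
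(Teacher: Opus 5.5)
The paper gives no proof of this statement. It only quotes Selberg's lemma, so there is no in-paper argument to compare yours with.

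Your sketch is the standard congruence-subgroup proof, and it is correct.
\begin{itemize}
\item You have $\Gamma\subset \mathrm{GL}_n(A)$ because you included the entries of the $g_i^{-1}$.
\item The residue fields $A/\mathfrak m$ are finite, and hence so are the rings $A/\mathfrak m^N$, since $A$ is Noetherian.
\item Expanding $(I+X)^\ell=I$ does force $\ell=\operatorname{char}(A/\mathfrak m)=p$, and then $r\le v(p)$.
\item The final intersection is a finite-index torsion-free subgroup.
\end{itemize}

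\textbf{Your last step is redundant.} It uses two devices where either one suffices.
\begin{itemize}
\item The $\ell\notin\mathfrak m$ computation alone already shows that $\Gamma(\mathfrak m_1)\cap\Gamma(\mathfrak m_2)$ is torsion-free, so the exponent $N$ is superfluous.
\item Conversely, on $\Gamma(\mathfrak m^N)\subset\Gamma(\mathfrak m)$ you have $r\ge N$. Your bound $r\le v(p)$ then shows that $\Gamma(\mathfrak m^N)$ is torsion-free for a single maximal ideal once $N>v(p)$.
\end{itemize}
The single-ideal route dispenses with maximal ideals of two distinct residue characteristics. That is the input you flagged as the main obstacle, so generic freeness and Noether normalization are no longer needed. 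The price is that you need a genuine valuation.

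\textbf{The choice of valuation needs care.} The completion $\hat A_{\mathfrak m}$ need not be a domain, let alone a discrete valuation ring. So $v$ should come from a discrete valuation ring dominating $A_{\mathfrak m}$. Such a ring exists because $A_{\mathfrak m}$ is a Noetherian local domain that is not a field.

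On the two-characteristic route you need no valuation at all. By Krull's intersection theorem there is $k\ge1$ with $X\in M_n(\mathfrak m^k)\setminus M_n(\mathfrak m^{k+1})$. The expansion gives $\ell X\in M_n(\mathfrak m^{2k})\subset M_n(\mathfrak m^{k+1})$. Since $\ell$ is a unit in the local ring $A/\mathfrak m^{k+1}$, this yields $X\in M_n(\mathfrak m^{k+1})$, a contradiction.
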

		
		Our main results concern restrictions on virtual cohomological dimension, which is defined as follows.
		\begin{defn}[\cite{BM}] \label{vcd}
		The \textit{integral cohomological dimension} of a group $\Ga$ is defined as
			\[\opnm{cd} \Ga  = \sup \l\{n: H^n(\Ga, M) \neq 0 \text{ for some } \bbZ[\Ga] \text{-module } M \r\}\ .\]
If $\Ga$ contains a torsion-free subgroup of finite index, its \textit{virtual cohomological dimension} is defined to be the integral cohomological dimension of any such subgroup, denoted by $\opnm{vcd}(\Ga)$.
		\end{defn}

		In \cite{GW} the authors established a topological obstruction for representations into $\PO(p, q)$ to be Anosov. 
		\begin{prop}[{\cite[Prop.~8.3]{GW}}] \label{leq p}
			Let $p > q$ be two positive integers and $\rho: \Ga \rtar \PO(p, q)$ a morphism. If $\rho$ is $P_1$-Anosov or $P_q$-Anosov, then $\opnm{vcd} (\Ga) \leq p$.
		\end{prop}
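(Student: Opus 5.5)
The statement is Proposition~\ref{leq p}, due to Guichard--Wienhard. The plan is to realize $\Ga$, up to finite index, as acting properly discontinuously and cocompactly on a manifold whose dimension bounds $\opnm{vcd}(\Ga)$.

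\textbf{Reduction to one parabolic.} By duality in $\PO(p,q)$, a representation is $P_1$-Anosov if and only if its boundary maps into $\IsGr_1$ and $\IsGr_{p+q-1}$-type flag spaces are compatible. Concretely, consider the natural embedding $\PO(p,q)\hookrightarrow \PGL(p+q,\bR)$. Then $P_1$-Anosov in $\PO(p,q)$ is equivalent to $P_1$-Anosov in $\PGL(p+q,\bR)$. Likewise, $P_q$-Anosov corresponds to $P_q$-Anosov there. So it suffices to treat each condition separately.

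\textbf{Torsion-free subgroup.} Since $\rho$ is Anosov, it has finite kernel, and $\rho(\Ga)$ is a finitely generated linear group. Lemma~\ref{Selberg} gives a torsion-free finite index subgroup $\Ga'$, which we may take inside $\PO_0(p,q)$ and with $\rho|_{\Ga'}$ injective. Then $\opnm{vcd}(\Ga)=\opnm{cd}(\Ga')$.

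\textbf{Cocompact domain of discontinuity.} We apply the Guichard--Wienhard (or Kapovich--Leeb--Porti) construction of cocompact domains of discontinuity. For $P_1$-Anosov $\rho$ with limit map $\xi^+:\pa_\infty\Ga\rtar \IsGr_1\bR^{p,q}$, take the space $\IsGr_q\bR^{p,q}$ of maximal isotropic subspaces and remove
\[K_\xi=\{W\in\IsGr_q\bR^{p,q}:\ \xi^+(x)\subset W \text{ for some } x\}.\]
For $P_q$-Anosov $\rho$, work instead in $\IsGr_1$ (the Einstein-type quadric $\Ein^{p-1,q-1}$) and remove the points contained in some $\xi^+(x)\in\IsGr_q$.

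This gives an open set $\Omega$ on which $\Ga'$ acts properly discontinuously and cocompactly. The hypothesis $p>q$ is what makes the thickening balanced, and it also guarantees that $\Omega$ is nonempty. Then $\Omega/\Ga'$ is a compact manifold. More relevantly, because $\Ga'$ is torsion-free and acts freely and properly on $\Omega$, we get
\[\opnm{cd}(\Ga')\le \dim\Omega-1\]
once we know that each component of $\Omega$ is noncompact or that the top cohomology vanishes appropriately. Alternatively, $\opnm{cd}(\Ga')\le\dim\Omega$, with equality only if $\Omega/\Ga'$ is aspherical and closed.

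\textbf{Dimension count: the main obstacle.} The space of points of $\Ein^{p-1,q-1}$ has dimension $p+q-2$, which is too big. The better route is to use the Riemannian symmetric-space-free argument of Guichard--Wienhard directly. Take a fiber bundle over $\Omega/\Ga'$ with contractible fibers, or show that $\Omega$ retracts onto a $\Ga'$-invariant set of dimension $p$.

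What I would try first is the following. Fix a negative definite $q$-plane $N$; the space of such planes is the symmetric space $\PO(p,q)/\PO(p)\times\PO(q)$. For the $P_q$ case, use the fibration of $\Ein$ over $S^{p-1}\times S^{q-1}$. Show that the fibers through the removed set let one compute the cohomology of $\Omega$ via a Mayer--Vietoris / Alexander duality argument. The goal is to get
\[H^{k}_c(\Omega)=0 \quad\text{for } k<\dim\Omega-p.\]
By Poincaré duality on $\Omega/\Ga'$ together with the Bieri--Eckmann criterion, this gives $\opnm{cd}(\Ga')\le p$.

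Controlling this Alexander duality count is where I expect the real work to lie. It requires that $\xi^+(\pa_\infty\Ga)$ has covering dimension $\opnm{vcd}(\Ga)-1$ (Bestvina--Mess) and that the thickening $K_\xi$ has codimension at least $q$.
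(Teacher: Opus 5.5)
The paper does not prove this statement; it only quotes \cite[Prop.~8.3]{GW}. Your proposal therefore has to stand on its own, and it does not. The decisive step is explicitly left open, and the ingredients you assemble for it fail.

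First, a free, properly discontinuous, cocompact action of a torsion-free group on an $m$-manifold gives no bound $\opnm{cd}\le m$ (let alone $\le m-1$) without control of the homotopy type of the manifold. For instance, $\bbZ^5$ acts in this way on the universal cover of a closed $4$-manifold with fundamental group $\bbZ^5$.

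Second, $\Omega$ need not be nonempty: $p>q$ does not guarantee it. In fact $\Omega$ is empty in the basic examples with $\opnm{vcd}(\Ga)=p$, i.e.\ exactly where the bound is sharp.
\begin{itemize}
\item Take a cocompact lattice of $\SO(p,1)$ embedded in $\PO(p,q)$ via $\bR^{p,q}=\bR^{p,1}\oplus\bR^{0,q-1}$; it is $P_1$-Anosov with $\opnm{vcd}=p$. Every $W\in\IsGr_q\bR^{p,q}$ meets $\bR^{p,1}$ nontrivially for dimension reasons, hence contains some $\xi^1(x)$. So $K_\xi=\IsGr_q\bR^{p,q}$.
\item Take a cocompact lattice of $\SU(n,1)$ in $\PO(2n,2)$ with $n\ge 2$; it is $P_2$-Anosov with $\opnm{vcd}=2n=p$. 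Every real isotropic line of $\mathbb{C}^{n,1}=\bR^{2n,2}$ spans a complex isotropic line, i.e.\ lies in some $\xi^2(x)$. So your $\Ein$-domain is empty.
\end{itemize}
In these cases no duality argument on $\Omega/\Ga'$ can produce anything.

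Third, the codimension input you need is not available. By transversality, the fibres of $K_\xi\rtar\pa_\infty\Ga$ over distinct points are disjoint. Each fibre ($\IsGr_{q-1}(\xi^1(x)^\perp/\xi^1(x))$, resp.\ $\bP(\xi^q(x))$) has codimension $p-1$. By Bestvina--Mess, $K_\xi$ therefore has codimension $p-\opnm{vcd}(\Ga)$ in both cases. Requiring codimension $\geq q$ amounts to assuming $\opnm{vcd}(\Ga)\le p-q$, which is stronger than the claim and false in the examples above.

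A route that works treats the two hypotheses separately.

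\emph{The $P_q$ case.} We have $\inp{\al_q,\mu(\rho(\ga))}\ge c|\ga|-C$, while $\mu(H)=\Over{\fa}^+\cap\Ker\al_q$ for $H=\bP(\O(p,q-1)\times\O(1))$. The Benoist--Kobayashi criterion \cite{Ben,Ko96} then gives a properly discontinuous action of $\Ga'$ on $\mbf{H}^{p,q-1}$. Kobayashi's inequality \cite{Ko89},
$\opnm{cd}(\Ga')\le\dim(G/H)-\dim(K/K_H)=\dim(\fp\cap\fq)=p$,
finishes this case (compare Theorem~\ref{dec} and Theorem~\ref{theo 1.10 KT}).

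\emph{The $P_1$ case.} Use the boundary map instead. By Dunwoody accessibility, reduce to the one-ended vertex groups: they are quasiconvex, hence still $P_1$-Anosov, and $\opnm{vcd}(\Ga)$ is the maximum of theirs (or at most $1$). For connected $\pa_\infty\Ga$ there are two possibilities. Either $\opnm{vcd}(\Ga)\le q<p$, or Theorem~\ref{res}(2) makes $\rho(\Ga)$ $\mbf{H}^{p,q-1}$-convex cocompact and Theorem~\ref{res}(1) gives $\opnm{vcd}(\Ga)\le p$. The underlying reason is that the connected, pairwise transverse set $\xi^1(\pa_\infty\Ga)$ is negative or positive. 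It is therefore a graph over a subset of $\mbf{S}^{p-1}$ or of $\mbf{S}^{q-1}$ (Proposition~\ref{Lem 3.3 BK} and its positive analogue). Hence $\dim\pa_\infty\Ga\le p-1$, and Bestvina--Mess concludes.
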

		
		\paragraph{Hyperconvexity.} The notion of hyperconvexity was introduced independently by Pozzetti--Sambarino--Wienhard \cite{PSW} and Zhang-Zimmer \cite{ZZ}, building on ideas from \cite{Lab}. 				
		\begin{defn}[{\cite[Def.~6.1]{PSW}}] \label{Defn hyper}
			We say a $\{P_1, P_p\}$-Anosov representation $\rho: \Ga \rtar \PGL_d(\bR)$ is \textit{$(1, 1, p)$-hyperconvex} if, for every triple of pairwise distinct points $x, y, z \in \pa_\infty \Ga$, the sum
			\[\l( \xi^1(x) \oplus \xi^1(y) \r) \cap \xi^{d-p}(z) = \{0\}.\]
		\end{defn}
		
		\begin{prop}[{\cite[Prop.~7.4]{PSW}}] \label{Prop hyper}
			Let $\rho: \Ga \rtar \PGL_d \bR$ be an $(1, 1, p)$-hyperconvex representation and assume that $\pa \Ga$ is a $(p-1)$-dimensional topologically sphere, then $\xi^1(\pa \Ga)$ is a $\calC^1$ manifold with $T_{\xi^1(x)} \xi^1(\pa \Ga) = T_{\xi^1(x)} \bP(\xi^p(x))$.
		\end{prop}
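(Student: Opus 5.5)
The plan is to show that, near each point $\xi^1(x)$, the set $\xi^1(\pa\Ga)$ is the graph of a map over $\bP(\xi^p(x))$, and that every secant line through two nearby points of $\xi^1(\pa\Ga)$ converges into $\xi^p(x)$. These two facts together give $\calC^1$ regularity with the asserted tangent space.

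\textbf{Step 1 (topological graph).} Fix $x\in\pa\Ga$ and choose some $z\neq x$. By transversality of the Anosov limit maps, $\xi^p(x)\oplus\xi^{d-p}(z)=\bR^d$. Let
\[\pi_z:\bP(\bR^d)\setminus\bP(\xi^{d-p}(z))\rtar\bP(\xi^p(x))\]
be the linear projection along $\xi^{d-p}(z)$. Then $\Phi=\pi_z\circ\xi^1$ is well defined and continuous on $\pa\Ga\setminus\{z\}$, because $\xi^1(y)\not\subset\xi^{d-p}(z)$ for $y\neq z$.

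$\Phi$ is injective. Indeed, if $\Phi(y)=\Phi(y')$ with $y\neq y'$, then $\xi^1(y)\oplus\xi^1(y')$ meets $\xi^{d-p}(z)$ nontrivially, which contradicts $(1,1,p)$-hyperconvexity (Definition~\ref{Defn hyper}).

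Now $\pa\Ga\setminus\{z\}\cong\bR^{p-1}$, and the target is a $(p-1)$-manifold. By invariance of domain, $\Phi$ is an open topological embedding. Hence, near $\xi^1(x)$, the set $\xi^1(\pa\Ga)$ is the graph of a continuous map from an open set of $\bP(\xi^p(x))$ to the complementary directions.

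\textbf{Step 2 (secants converge).} I will show that whenever $y_n\neq w_n$ and $y_n,w_n\rtar x$, the lines $\xi^1(y_n)\oplus\xi^1(w_n)$ accumulate only on $2$-planes contained in $\xi^p(x)$. This is the main obstacle, and I would attack it by a rescaling argument.

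Since $\Ga$ acts cocompactly on the space of pairwise distinct triples of $\pa\Ga$, pick $\ga_n\in\Ga$ such that the triple $\ga_n^{-1}(x,y_n,w_n)$ lies in a fixed compact set $\mathcal K$ of distinct triples. Because $y_n,w_n\rtar x$, we have $\ga_n\rtar\infty$ and $\ga_n\rtar x$ in $\Ga\cup\pa\Ga$.

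On $\mathcal K$, hyperconvexity and compactness give a uniform lower bound on the angle between $\xi^1(\ga_n^{-1}y_n)\oplus\xi^1(\ga_n^{-1}w_n)$ and $\xi^{d-p}$ of some point $u$ bounded away from these triples. I would pick $u$ as a subsequential limit of $\ga_n^{-1}$-repelling points, so that the repelling $(d-p)$-subspaces of $\rho(\ga_n)$ converge to $\xi^{d-p}(u)$.

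The $P_p$-Anosov property gives $\sigma_p/\sigma_{p+1}(\rho(\ga_n))\rtar\infty$. Moreover, $U_p(\rho(\ga_n))\rtar\xi^p(x)$ by the standard convergence of Cartan attracting subspaces to the limit map. Hence $\rho(\ga_n)$ sends any $2$-plane uniformly transverse to the repelling $(d-p)$-space into an arbitrarily small neighbourhood of $\mathrm{Gr}_2(\xi^p(x))$. Applying $\rho(\ga_n)$ to the rescaled secant plane yields the claim, and the estimate is uniform in $x$.

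\textbf{Step 3 (conclusion).} Step 2 shows that the graph map from Step 1 has all its difference quotients converging to the identification $T\bP(\xi^p(x))$. More precisely, its secants become asymptotically tangent to $\bP(\xi^p(x))$. Since $x\mapsto\xi^p(x)$ is continuous, the graph map is differentiable with continuous derivative. Therefore $\xi^1(\pa\Ga)$ is a $\calC^1$ submanifold with
\[T_{\xi^1(x)}\xi^1(\pa\Ga)=T_{\xi^1(x)}\bP(\xi^p(x)).\]
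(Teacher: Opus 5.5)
Your architecture is sound: the graph over $\bP(\xi^p(x))$ via projection along $\xi^{d-p}(z)$ plus invariance of domain, then a secant-convergence statement, then $\calC^1$. Steps 1 and 3 are fine. The paper does not prove this proposition; it quotes it from \cite{PSW}. There is, however, a genuine gap in Step 2: the choice of $\ga_n$ by normalizing the triple $(x,y_n,w_n)$.

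The point $u$ is not at your disposal. The contraction estimate needs the repelling subspaces $U_{d-p}(\rho(\ga_n\inv))$ of $\rho(\ga_n)$, and these converge to $\xi^{d-p}(u)$ with $u=\lim \ga_n\inv$ forced by the sequence. Nothing guarantees $u\neq \lim\ga_n\inv y_n$ or $u \neq \lim\ga_n\inv w_n$, and this fails whenever one of $y_n,w_n$ approaches $x$ much faster than the other. For example, in $\pa \mbf{H}^2=\bR\cup\{\infty\}$ take $x=0$, $y_n=\epsilon_n$, $w_n=\epsilon_n^2$. The M\"obius map sending $(0,1,\infty)$ to $(x,y_n,w_n)$ is $t\mapsto \epsilon_n^2 t/(t+\epsilon_n-1)$. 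Its repelling point $1-\epsilon_n$ tends to $1$, which is exactly the normalized position of $y_n$. Coarsely, the center of the triangle $(x,y_n,w_n)$ sits at the scale of the closer point $w_n$, and from there the identity and $y_n$ are seen in the same direction. Then $\xi^{d-p}(u)\supset \xi^1(u)$, so the limiting normalized secant meets the repelling subspace and you get no uniform angle. The contraction then tells you nothing: $\xi^1(y_n)$ and $\xi^1(w_n)$ both tend to $\xi^1(x)$, so their span could a priori converge to any $2$-plane through $\xi^1(x)$. Separately, your triple is undefined when $y_n=x$ or $w_n=x$.

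The fix is to normalize with respect to the pair alone. Choose $\ga_n$ with $\ga_n\inv(y_n,w_n,z)\in\mathcal K$, where $z\neq x$ is the fixed point from Step 1.
\begin{itemize}
\item Then $\ga_n\rtar\infty$, and after passing to a subsequence $\ga_n\rtar \ga^+$ and $\ga_n\inv\rtar\ga^-$.
\item If $\ga_n\inv z$ stayed away from $\ga^-$, then $z=\ga^+$. But at least one of $\ga_n\inv y_n$, $\ga_n\inv w_n$ also stays away from $\ga^-$, so its image $y_n$ or $w_n$ would converge to $z\neq x$, a contradiction.
\item Hence $\ga_n\inv z\rtar\ga^-=:u$, the normalized $y_n,w_n$ stay uniformly away from $u$ and from each other, and $\ga^+=x$.
\end{itemize}
Hyperconvexity at the limiting triple, together with $U_{d-p}(\rho(\ga_n\inv))\rtar\xi^{d-p}(u)$, $U_p(\rho(\ga_n))\rtar\xi^p(x)$ and $\sigma_p/\sigma_{p+1}(\rho(\ga_n))\rtar\infty$, now gives exactly your estimate. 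Step 3 then goes through as you describe.
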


	\subsection{Compact quotient of homogeneous space}
	In this section, we introduce the result in \cite{KT} relating compact quotient to Anosov representation, and introduce some definitions and results about reductive homogeneous space, then finally we prove Lemma~\ref{connected G/H}. 	
	
	\subsubsection{Compact quotient and Anosov}

		\begin{theo}[\cite{KT}, Th. 1.10] \label{theo 1.10 KT}
			Let $G$ be a connected real linear reductive Lie group, $\mu: G \rtar \fa^+$ a Cartan projection, and $H$ a closed subgroup of $G$ such that $\mu(H) = \fa^+ \cap \bigcup\li_{\al \in \Th} \Ker (\al)$ for some subset $\Th \subset \De$ of the simple restricted roots of $G$. Then for any discrete subgroup $\Ga$ of $G$, the following conditions are equivalent:
			\begin{itemize}
				\item[(1)] $\Ga$ acts properly discontinuously and cocompactly on $G/ H$,
				\item[(2)] $\Ga$ is word hyperbolic with 
				\[\opnm{vcd}(\Ga) = \dim(G/ H) - \dim (K / K_H)\ ,\]
where $K_H = K \cap H$, and the natural inclusion $\Ga \hrtar G$ is a $P_\Th$-Anosov representation.
			\end{itemize}
		\end{theo}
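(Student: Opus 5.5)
The plan is to prove the two implications separately. Properness is handled by the Benoist--Kobayashi criterion and the growth characterization of Anosov representations (Theorem~\ref{theo 1 Anosov}, condition (2)). Cocompactness and the dimension count are handled by a homotopy-theoretic comparison of $\Ga\backslash G/H$ with a compact fiber bundle over $\Ga\backslash G/K$. Both directions reduce, by Lemma~\ref{Selberg}, to a torsion-free finite index subgroup $\Ga'$. Neither properness nor cocompactness nor $\opnm{vcd}$ changes under this passage, so I assume $\Ga$ torsion-free throughout.

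\textbf{Topological step (used in both directions).} Since $H$ is reductive and $\th$-stable, the Mostow decomposition gives
\[G/H \simeq K\times_{K_H} (\fp\cap\fh^\perp).\]
Hence $G/H$ deformation retracts onto $K/K_H$. More usefully, the two fibrations
\[\Ga\backslash G/K_H \rtar \Ga\backslash G/H \quad(\text{fiber } H/K_H \text{ contractible})\]
and
\[\Ga\backslash G/K_H\rtar \Ga\backslash G/K \quad(\text{fiber } K/K_H)\]
show the following whenever $\Ga$ acts properly and freely on $G/H$: the manifold $M=\Ga\backslash G/H$, of dimension $d=\dim(G/H)$, is homotopy equivalent to a bundle with compact fiber $K/K_H$ over the aspherical manifold $\Ga\backslash G/K$, whose fundamental group is $\Ga$. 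Write $m=\dim(K/K_H)$.

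With $\bbZ/2$ (or twisted) coefficients, the Leray--Serre spectral sequence then gives that the top nonvanishing (co)homological degree of $M$, with coefficients in $\bbZ[\Ga]$-type modules, equals $\opnm{cd}(\Ga)+m$. If $M$ is closed, Poincaré duality puts this top degree at $d$, giving $\opnm{vcd}(\Ga)=d-m$. Conversely, if $\opnm{cd}(\Ga)=d-m$, then $M$ has nonzero homology (with suitable local coefficients) in degree $d=\dim M$. A $d$-manifold without boundary with this property must be compact.

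\textbf{(2)$\Rightarrow$(1).} Condition (2) of Theorem~\ref{theo 1 Anosov} gives $\inp{\al,\mu(\ga)}\geq c\v{\ga}-C$ for all $\al\in\Th$. Since $\mu(H)=\Over{\fa}^+\cap\bigcup_{\al\in\Th}\Ker\al$, the distance from $\mu(\ga)$ to $\mu(H)$ is comparable to $\min_{\al\in\Th}\al(\mu(\ga))$, so this distance tends to infinity. The Benoist--Kobayashi criterion then yields properness. Cocompactness follows from the topological step together with the hypothesis $\opnm{vcd}(\Ga)=d-m$.

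\textbf{(1)$\Rightarrow$(2).} The topological step gives $\opnm{vcd}(\Ga)=d-m$, so what remains is to show that $\Ga$ is word hyperbolic and $P_\Th$-Anosov. I will aim for the sharp, linear estimate $\al(\mu(\ga))\geq c\v{\ga}-C$ for $\al\in\Th$; the Benoist--Kobayashi criterion only gives divergence.

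\begin{itemize}
\item Fix a compact fundamental domain $D\subset G/H$. Join the basepoint to $\ga D$ inside $G/H$, for instance by a path staying close to a $K$-orbit times a flat.
\item Cover that path by $\v{\ga}$-many translates of $D$, which is possible by cocompactness and the Švarc--Milnor lemma for the induced proper cocompact action.
\item Use the fact that, along such a path, $\min_{\al\in\Th}\al\circ\mu$ can increase at most boundedly per step but must grow: otherwise one finds elements $\ga$ with $\mu(\ga)$ within bounded distance of $\mu(H)$, contradicting properness.
\end{itemize}

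This linear lower bound, holding uniformly along geodesic rays, is exactly condition (3) of Theorem~\ref{theo 1 Anosov}. The Kapovich--Leeb--Porti / Bochi--Potrie--Sambarino theorem then shows that a finitely generated group with such uniform $\Th$-regular growth is word hyperbolic and its inclusion is $P_\Th$-Anosov.

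\textbf{Main obstacle.} I expect this last point to be the hard step: upgrading the mere divergence of $\mu(\Ga)$ away from $\mu(H)$ into a uniform linear bound, which is the source of hyperbolicity. My first attempt will combine the compactness of $D$ with a subadditivity argument for $\ga\mapsto\min_{\al\in\Th}\al(\mu(\ga))$ along words, in the spirit of Kassel--Tholozan's sharpness argument.
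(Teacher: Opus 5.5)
The paper gives no proof of this statement; it is quoted from Kassel--Tholozan, so I am judging your proposal against that theorem. Two parts are sound. The direction (2)$\Rightarrow$(1), and the equality $\opnm{vcd}(\Ga)=\dim(G/H)-\dim(K/K_H)$ inside (1)$\Rightarrow$(2), amount to the Benoist--Kobayashi properness criterion plus Kobayashi's cohomological-dimension criterion, and your fibration and Serre spectral sequence sketch is essentially Kobayashi's argument. Two small repairs are needed there. First, $H$ must be taken reductive, as Kassel--Tholozan do, for the Mostow decomposition. Second, run the converse in cohomology rather than homology. The corner term $H^{\opnm{cd}\Ga}(\Ga;\,\cdot\,)$, with coefficients twisted by the vertical orientation, survives. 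An open connected $d$-manifold has the homotopy type of a $(d-1)$-complex, so its $H^d$ vanishes for every local system. In homology you would additionally need homological dimension to equal $\opnm{cd}$.

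The genuine gap is the linear bound $\min_{\al\in\Th}\al(\mu(\ga))\geq c\v{\ga}-C$ in (1)$\Rightarrow$(2). This is not a technical step. It is the sharpness theorem (the Kassel--Kobayashi conjecture), which is the entire new content of the cited result, and your sketch does not supply it.

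Your key sentence is a non sequitur: ``otherwise one finds elements $\ga$ with $\mu(\ga)$ within bounded distance of $\mu(H)$, contradicting properness.'' Failure of a linear bound only yields $\ga_n$ with $d(\mu(\ga_n),\mu(H))=o(\v{\ga_n})$. That is fully compatible with $d(\mu(\ga_n),\mu(H))\to\infty$, and divergence is exactly what properness gives, no more. Already under the hypotheses of the theorem this happens: take $G=\mathrm{PSL}_2(\bR)$, $H=K$, so that $\mu(H)=\{0\}=\Over{\fa}^+\cap\Ker\al$. A parabolic cyclic subgroup acts properly on $\mathbb{H}^2$, yet $\al(\mu(\ga))\asymp\log\v{\ga}$.

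So cocompactness must be used essentially, and your only use of it does not connect $\v{\ga}$ to $\mu$. You cover a path by translates of $D$, but Švarc--Milnor compares $\v{\ga}$ with a $\Ga$-invariant Riemannian metric on $G/H$. Outside the Riemannian case that metric has no a priori relation to $\mu$. A path chosen through $K\exp(\fa)$ has no controlled length in it, and along a genuine quasi-geodesic for it you have no control of $\mu$. The increment estimate $\v{\al(\mu(gs))-\al(\mu(g))}\leq\V{\al}\V{\mu(s)}$, your subadditivity idea, gives only upper bounds $\al(\mu(\ga))\leq C\v{\ga}$, never lower ones. Moreover $\al\circ\mu$ is not a function on $G/H$, so ``along a path in $G/H$'' is not meaningful.

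Until this step is replaced by an actual proof of sharpness, you cannot apply Kapovich--Leeb--Porti or Bochi--Potrie--Sambarino. So neither hyperbolicity nor the $P_\Th$-Anosov property is established. A smaller point: condition (3) of Theorem~\ref{theo 1 Anosov} presupposes hyperbolicity. What you actually need is their result that the word-length linear bound alone forces hyperbolicity and the Anosov property.
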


		 We apply this theorem to the pseudo-Riemannian hyperbolic space $\mbf{H}^{p, q-1}$ defined in the introduction, where $p > q$. In this case, $G = \PO(p, q)$ and $H = \bP(\O(p, q-1) \times \O(1))$, $\Th = \{\al_q\}$ and $\mu(H) = \fa^+ \cap \Ker (\al_q)$. Therefore, a discrete subgroup $\Ga< \PO(p, q)$ acts properly discontinuously and cocompactly on $\mbf{H}^{p, q-1}$ is equivalent to, $\Ga$ is $P_q$-Anosov with $\opnm{vcd}(\Ga) = p$. This is the Kassel--Tholozan criterion introduced in the introduction.

	\subsubsection{Reductive homogeneous space}
		 
		 \begin{defn}
		Let $G$ be a real linear reductive Lie group, we say a closed subgroup $H$ is a \textit{reductive subgroup} if there exists a Cartan involution $\th: G \rtar G$ such that $\th(H) = H$.

		In this case, the quotient space $G/H$, equipped with the left action of $G$, is called a \textit{reductive homogeneous space} (or \textit{homogeneous space of reductive type}.)
		\end{defn}
		
		Assume that $H$ is a reductive subgroup of $(G, \th)$, then $(H, \th|_H)$ is reductive. $K_H := H^{\th|_H} = K \cap H$ is a maximal compact subgroup of $H$. Let $\fh$ be the Lie algebra of $H$, then we have the Cartan decomposition $\fh = (\fh \cap \fk) \oplus (\fh \cap \fp)$.

	We fix a $G$-invariant and $\th$-invariant non-degenerate symmetric bilinear form $B$ on $\fg$ for which the symmetric bilinear form
	\[B_\th: \fg \times \fg \rtar \bR\ , \quad (X, Y) \mapsto - B(\th (X), Y)\]
is positive definite. By definition, $B_\th$ is $K$-invariant. If $G$ is semisimple, the Killing form satisfies these requirements.	
	
	With respect to $B$ or $B_\th$, the decomposition $\fg = \fk \oplus \fp$ is orthogonal. Let $\fq$ be the orthogonal complement of $\fh$ in $\fg$ with respect to $B$ (also, with respect to $B_\th$). We have the following orthogonal $K_H$-invariant decomposition:
	\[\fg = (\fh \cap \fk) \oplus (\fh \cap \fp) \oplus (\fq \cap \fk) \oplus (\fq \cap \fp) \ .\]
			
		We recall a classical model for the reductive homogeneous space $G/H$ (see, for example, \cite{KMT}). This model was well-known for reductive symmetric space and was proved by Kobayashi in \cite{Ko89} for the general reductive setting.  Define $N := K \times_{K_H} (\fp \cap \fq) = K \times (\fp \cap \fq) / \sim$ where $(k_1, X) \sim (k_2, Y)$ if and only if there exists $k_h \in K_H$ such that $k_1 = k_2 \cdot k_h$ and $X = \Ad(k_h\inv) Y$. By definition, $N$ is a vector bundle over $K / K_H$ with fibers isomorphic to $\fp \cap \fq$.

	\begin{theo}\label{dec}
		The map
		\[\Phi: N = K \times_{K_H} (\fp \cap \fq) \rtar G/H, \quad [k, v] \mapsto k \exp(v) \cdot H\]
is a $K$-equivariant diffeomorphism.
	\end{theo}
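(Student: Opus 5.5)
The plan is to deduce the statement from the geometry of the Riemannian symmetric space $X := G/K$, which is a Hadamard manifold, together with the normal exponential map of a totally geodesic submanifold. First I check that $\Phi$ is well defined and $K$-equivariant. If $(k_1,X)\sim(k_2,Y)$ with $k_1=k_2k_h$ and $X=\Ad(k_h\inv)Y$, then
\[k_1\exp(X)H=k_2k_h\exp(\Ad(k_h\inv)Y)H=k_2\exp(Y)k_hH=k_2\exp(Y)H,\]
since $k_h\in H$. Equivariance under left multiplication by $K$ is then immediate. It remains to prove that $\Phi$ is a bijection and a local diffeomorphism.

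Let $o=eK\in X$. Since $\th(H)=H$, the orbit $Y:=H\cdot o\simeq H/K_H$ equals $\exp(\fh\cap\fp)\cdot o$. Because $\fh\cap\fp$ is a Lie triple system, $Y$ is a closed, totally geodesic, hence convex, submanifold of $X$. Its tangent space at $o$ is $\fh\cap\fp$. Under the identification $T_oX\simeq\fp$ given by $B_\th$, the normal space at $o$ is the $B_\th$-orthogonal complement of $\fh\cap\fp$ in $\fp$, namely $\fp\cap\fq$. By $H$-homogeneity, the normal bundle of $Y$ is therefore $\nu Y\simeq H\times_{K_H}(\fp\cap\fq)$, and its normal exponential map reads
\[E:[h,v]\mapsto h\exp(v)\cdot o .\]

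The key step is the classical fact that, for a closed convex totally geodesic submanifold of a Hadamard manifold, $E$ is a diffeomorphism. The inverse is the nearest-point projection $\pi_Y$: every $x\in X$ has a unique closest point $y=\pi_Y(x)\in Y$, and the geodesic from $y$ to $x$ leaves $Y$ orthogonally. This gives bijectivity of $E$. To see that $E$ is a local diffeomorphism, I use that in nonpositive curvature there are no focal points of a totally geodesic submanifold: $Y$-Jacobi fields have convex norm and cannot vanish. I expect this to be the main technical step, although it is standard; alternatively, I can quote the Mostow decomposition in its Kobayashi form \cite{Ko89}.

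Consequently, $G=H\exp(\fp\cap\fq)K$, and $g=h\exp(v)k$ determines $[h,v]\in H\times_{K_H}(\fp\cap\fq)$ uniquely. This yields a diffeomorphism $H\times_{K_H}(\fp\cap\fq)\times K \simeq G$, modulo the obvious identification. Applying the inversion $g\mapsto g\inv$, which sends $h\exp(v)k$ to $k\inv\exp(-v)h\inv$ and respects $v\mapsto -v$ on $\fp\cap\fq$, I obtain that every element of $G$ is written as $k\exp(w)h$ with $[k,w]\in K\times_{K_H}(\fp\cap\fq)$ uniquely determined by the coset $gH$. Hence $\Phi$ is a bijection.

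Smoothness of $\Phi$ is clear. Smoothness of $\Phi\inv$ follows from the smoothness of $E\inv$ (that is, of $\pi_Y$ together with the normal vector) transported through inversion and the submersions $G\to G/H$ and $G\to G/K$. Therefore $\Phi$ is a $K$-equivariant diffeomorphism.
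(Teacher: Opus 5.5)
The paper gives no proof of this statement. It quotes it as the classical Mostow decomposition, in the general reductive form of Kobayashi \cite{Ko89}. Your proposal is a correct, self-contained proof along the standard geometric lines. The citation is short; your argument shows exactly where the hypotheses enter.

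Your steps are as follows. You identify $G/K$, $H$-equivariantly, with the normal bundle $H\times_{K_H}(\fp\cap\fq)$ of the totally geodesic orbit $H\cdot o$; this uses unique nearest-point projection and the absence of focal points in nonpositive curvature. You read off $G=H\exp(\fp\cap\fq)K$ with $[h,v]$ determined by $g$. Finally, inversion swaps the roles of $H$ and $K$.

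The two steps you leave implicit do go through:
\begin{itemize}
\item[(i)] If $k_1\exp(w_1)=k_2\exp(w_2)h$ with $h\in H$, then $E([e,-w_1])=E([h\inv,-w_2])$. Injectivity of $E$ forces $h\in K_H$, and hence $[k_1,w_1]=[k_2,w_2]$.
\item[(ii)] One has $\Phi\inv(gH)=[gh\exp(v),-v]$, where $(h,v)$ is a locally smooth choice of representative of $E\inv(g\inv\cdot o)$. This is smooth on $G$, so it descends to a smooth map on $G/H$.
\end{itemize}

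One claim should be justified rather than asserted. The equality $H\cdot o=\exp(\fh\cap\fp)\cdot o$, i.e.\ $H=K_H\exp(\fh\cap\fp)$, does not follow from $\th(H)=H$ alone. Take $G=\bR_{>0}$, $\th(g)=g\inv$ and $H=2^{\bbZ}$. Then the orbit $H\cdot o$ is discrete, and the statement itself fails: $N\simeq\bR$ while $G/H$ is a circle.

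The equality does hold when $H$ has finitely many components. For $h=k\exp(X)$ one has $\exp(2X)=\th(h)\inv h\in H$. Hence $\exp(2mX)\in H_0=(K_H)_0\exp(\fh\cap\fp)$ for some $m\geq 1$. Uniqueness of the polar decomposition of $G$ then gives $X\in\fh\cap\fp$. This hypothesis is implicit in the paper's assertion that $(H,\th|_H)$ is reductive, and it is satisfied by $H=\bP(\O(p,q-1)\times\O(1))$.
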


	\subsubsection{Proof of Lemma~\ref{connected G/H}} \label{tech lem}
		We prove now Lemma~\ref{connected G/H} by using the model introduced above and some results in geometric group theory.

			\begin{defn}[{\cite[Def.~9.3]{GGT}}]
			Let $Y$ be a proper geodesic metric space. \textit{The number of ends} of $Y$ is the supremum, taken over all compact subsets $K \subset Y$, of cardinalities of $\pi_0(U_K)$, where $U_K$ is the union of unbounded path-connected components of $Y \setminus K$.

		\end{defn}
		
		\begin{lem}[{\cite[Coro.~11.137]{GGT}}] \label{fact con}
		 An infinite word hyperbolic group $\Ga$ is one-ended (the number of ends of $\Ga$ is $1$) if and only if $\partial_\infty \Ga$  is connected.
		 \end{lem}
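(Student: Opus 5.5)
This is a standard fact from geometric group theory. I would prove it directly in a Cayley graph $X$ of $\Ga$ with respect to a finite generating set. Such an $X$ is a proper geodesic $\de$-hyperbolic space, and the number of ends does not depend on the choice of generating set. I would use the Gromov compactification $X\cup\pa_\infty\Ga$ together with the Gromov product $(\cdot|\cdot)_e$ based at the identity $e$. Two standard facts are needed. First, $(x_n|y_n)_e\to\infty$ whenever $x_n,y_n$ converge to the same boundary point. Second, for $x,y\in X$, $(x|y)_e$ is within $2\de$ of the distance from $e$ to any geodesic $[x,y]$. Since $\Ga$ is infinite, $\pa_\infty\Ga\neq\varnothing$.

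\emph{Boundary connected $\Rightarrow$ one-ended.} Suppose $K\subset X$ is compact and $X\setminus K$ has at least two unbounded path components. There are only finitely many such components, $U_1,\dots,U_m$, since each meets the finite $1$-neighbourhood of $K$ in the locally finite graph $X$. Let $\La_i\subset\pa_\infty\Ga$ be the set of accumulation points of $U_i$; each $\La_i$ is closed. Each $\La_i$ is nonempty, because an unbounded set in a proper space accumulates on the boundary.

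\begin{itemize}
\item The $\La_i$ cover $\pa_\infty\Ga$. A point $\xi$ is the endpoint of a geodesic ray from $e$, and the tail of that ray is connected, unbounded and disjoint from $K$, so it lies in a single $U_i$.
\item The $\La_i$ are pairwise disjoint. Take $u\in U_i$ and $v\in U_j$ with $i\neq j$. Any geodesic $[u,v]$ meets $K$, so $(u|v)_e\leq d(e,K)+\operatorname{diam}K+2\de$ is bounded. Hence sequences in $U_i$ and in $U_j$ cannot converge to a common boundary point.
\end{itemize}

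This writes $\pa_\infty\Ga$ as a disjoint union of at least two nonempty closed sets, which contradicts connectedness.

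\emph{One-ended $\Rightarrow$ boundary connected.} Suppose $\pa_\infty\Ga=A\sqcup B$ with $A,B$ closed and nonempty. By compactness and the fact that the Gromov product on the boundary is bounded off the diagonal, there is $R$ with $(a|b)_e\leq R$ for all $a\in A$, $b\in B$.

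The key input is the standard fact that in the Cayley graph of an infinite hyperbolic group, every vertex lies within a uniform distance $C$ of a geodesic ray starting at $e$. One proves this by translating long geodesics through $x$ and using properness and compactness via Arzelà--Ascoli.

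For $x$ with $d(e,x)>R'$, where $R'\gg R+C+\de$, choose such a ray and call $x$ of \emph{type $A$} or \emph{type $B$} according to the ray's endpoint. Then:

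\begin{itemize}
\item \emph{Types are well defined.} Two rays passing within $C$ of $x$ have endpoints whose Gromov product is at least $d(e,x)-C-O(\de)>R$, so their endpoints lie in the same piece.
\item \emph{Types are locally constant.} Adjacent vertices $x,y$ outside the ball give Gromov products of the same order, so $x$ and $y$ have the same type.
\end{itemize}

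Consequently $X\setminus \O B(e,R')$ splits into the type-$A$ points and the type-$B$ points, with no edge or path joining the two sets. Both sets are unbounded, since they contain tails of rays to points of $A$ and of $B$ respectively. This gives at least two ends, a contradiction.

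The main obstacle I expect is the uniform statement that every vertex is $C$-close to a ray from $e$. This is exactly where infiniteness of $\Ga$ and cocompactness of the action on $X$ are used, and the remaining estimates are routine $\de$-hyperbolic bookkeeping.
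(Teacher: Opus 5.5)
Your proof is correct. The paper gives no proof of this lemma and only quotes \cite[Coro.~11.137]{GGT}, so what you give is a self-contained replacement for the citation, and it is the standard argument. In one direction, rays from $e$ send each boundary point to an unbounded component of $X\setminus K$, and the inequality $(u|v)_e\le d(e,[u,v])\le \sup_{k\in K}d(e,k)$ for $u,v$ in different components makes the resulting closed pieces of $\partial_\infty\Gamma$ disjoint. In the other direction, visuality of the Cayley graph pulls a clopen splitting of the boundary back to a splitting of $X\setminus\overline{B}(e,R')$ into two unbounded parts.

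What this buys over the bare citation is that it shows exactly where the hypotheses are used. The implication ``connected boundary $\Rightarrow$ at most one end'' holds in every proper geodesic hyperbolic space, and $\partial_\infty\Gamma\neq\varnothing$ supplies at least one end. The converse needs the uniform constant $C$ with every point $C$-close to a ray from $e$, and this is where infiniteness and cocompactness enter.

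A few bookkeeping points to tidy:
\begin{itemize}
\item Two rays passing within $C$ of $x$ only give endpoints with Gromov product at least $d(e,x)-2C-O(\delta)$, not $d(e,x)-C-O(\delta)$. Your choice $R'\gg R+C+\delta$ absorbs this.
\item Types should be defined for all points of $X$, with $C+\tfrac12$ in place of $C$. Alternatively, note that an edge with both endpoints in $\overline{B}(e,R')$ only produces bounded components.
\item Finiteness of the set of unbounded components in the first direction is unnecessary, since you may compare $U_1$ with the union of all the others.
\end{itemize}
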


		\begin{lem}[{\cite[Lem.~9.1]{GGT}}] \label{number}
		The number of ends is a quasi-isometry invariant.
		\end{lem}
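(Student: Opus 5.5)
The plan is to show that if $f\colon X\rtar Y$ is a $(\lambda,c)$-quasi-isometry between proper geodesic metric spaces, then the number of ends of $Y$ is at least the number of ends of $X$. Applying the same argument to a quasi-inverse $g\colon Y\rtar X$ then gives equality. Fix a compact set $K\subset X$ such that $X\setminus K$ has $n$ unbounded path-connected components $U_1,\dots,U_n$. The goal is to produce a compact $K'\subset Y$ for which $Y\setminus K'$ has at least $n$ unbounded components.

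\textbf{Step 1 (thickening $K$).} I would first replace $K$ by the closed $r$-neighbourhood $K_r=\O{N}_r(K)$, where $r$ is a large constant chosen later in terms of $\lambda$ and $c$; $K_r$ is compact because $X$ is proper. Each $U_i$ contains an unbounded component $U_i'$ of $X\setminus K_r$. Indeed, $U_i$ is unbounded, so it contains points arbitrarily far from $K$. Take a geodesic in $X$ from a point of $K$ to such a point; its final portion lies outside $K_r$, and that portion is contained in one component of $X\setminus K_r$, which in turn lies in $U_i$. Repeating this along a sequence of points going to infinity, and using that $K_r$ is bounded, I would extract one unbounded component $U_i'\subset U_i$. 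Since the $U_i$ are pairwise distinct, so are the $U_i'$. The reason for passing to $U_i'$ is that every path inside it stays at distance at least $r$ from $K$.

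\textbf{Step 2 (pushing forward).} Set $K'=\O{N}_R(f(K))$ for a suitable $R$; it is compact since $f(K)$ is bounded and $Y$ is proper. Let $\sigma$ be a path in $U_i'$. Sample $\sigma$ at points spaced at most $1$ apart, apply $f$, and join consecutive images by geodesics of length at most $\lambda+c$. Every point of the resulting coarse path lies at distance at least $r/\lambda-c-(\lambda+c)$ from $f(K)$. Choosing $r$ large compared with $R$ makes this exceed $R$, so the coarse path avoids $K'$. Consequently, for each $i$ the image $f(U_i')$ lies in a single component $V_i$ of $Y\setminus K'$, up to these connecting segments. Moreover $V_i$ is unbounded, because $f$ is proper-like: $f$ sends unbounded sets to unbounded sets.

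\textbf{Step 3 (injectivity, the main obstacle).} It remains to show that $V_i\neq V_j$ for $i\neq j$. Suppose $f(x)$ and $f(y)$, with $x\in U_i'$ and $y\in U_j'$, are joined by a path $\tau$ in $Y\setminus K'$. Sample $\tau$ at spacing at most $1$, apply a quasi-inverse $g$, and join consecutive points by geodesics of bounded length $C=C(\lambda,c)$. The new path starts at $gf(x)$, which is within distance $c'$ of $x$, and ends within $c'$ of $y$; close these gaps with short geodesics. A point $z$ on $\tau$ satisfies $d(z,f(K))>R$, hence $d(g(z),K)\geq (R-c'')/\lambda$, and so the whole pulled-back path stays outside $K$ once $R$ is large enough. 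This gives a path from $x$ to $y$ in $X\setminus K$, contradicting $U_i\neq U_j$.

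The delicate point is the order in which the constants are chosen. I would first fix $R$ large in terms of $\lambda,c$ so that Step 3 works, and only then fix $r$ large in terms of $R$ so that Step 2 works. This yields $n$ distinct unbounded components $V_1,\dots,V_n$ of $Y\setminus K'$, which completes the proof.
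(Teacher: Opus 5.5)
The paper gives no proof of this lemma; it is quoted from \cite{GGT}, so there is no in-paper argument to compare with. Your proof is correct in outline and follows the standard direct route: thicken $K$, push unbounded components forward by coarse paths, separate them with a quasi-inverse, and fix $R$ before $r$. Steps~2 and~3 check out with the constants you indicate.

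The step whose justification, as written, does not suffice is the extraction of $U_i'$ in Step~1. Boundedness of $K_r$ does not by itself exclude the following: the components $W_m$ of $X\setminus K_r$ through your points $x_m$ could be infinitely many distinct bounded sets. This is exactly what happens in a non-proper geodesic space. Take the segment $[0,1]$ with spikes of lengths $2,3,4,\dots$ glued at $1$, and set $K=\{0\}$, $r=2$. Then $X\setminus K$ is connected and unbounded, but every component of $X\setminus K_r$ is a bounded spike tip.

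Properness is the operative hypothesis, and the fix is short:
\begin{enumerate}
\item Let $\gamma_m$ be a geodesic from $x_m$ to a nearest point of $K$. Along $\gamma_m$ the function $d(\cdot,K)$ decreases at unit speed, so $W_m$ meets the level set $S=\{z : d(z,K)=r+1\}$. This set is compact because $X$ is proper.
\item Components of the open set $X\setminus K_r$ are open, since balls in a geodesic space are path-connected. Hence only finitely many components meet $S$.
\item By pigeonhole, some $W_m$ contains infinitely many $x_m$, so it is unbounded. It lies in $U_i$, being a connected subset of $X\setminus K$ through $x_m$.
\end{enumerate}
Equivalently, you can extract a limiting geodesic ray via Arzel\`a--Ascoli.

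In Step~3 you should also say explicitly that the gap-closing segments from $x$ to $gf(x)$ and from $y$ to $gf(y)$ avoid $K$. This holds once $r$ exceeds the quasi-inverse constant, which fits your order of choosing constants. With these two additions the argument is complete.
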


		\begin{lem}\label{one-ended}
		Let $G/ H$ be a connected reductive homogeneous spaces and
		\[ \dim(G/H) - \dim(K/K_H) > 1\ ,\]
then $G/H$ is one-ended.
		\end{lem}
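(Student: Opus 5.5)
Via Theorem~\ref{dec}, $G/H$ is $K$-equivariantly diffeomorphic to the vector bundle $N = K\times_{K_H}(\fp\cap\fq)$ over the compact manifold $K/K_H$. The fiber dimension is $m := \dim(\fp\cap\fq) = \dim(G/H) - \dim(K/K_H)$, so the hypothesis says $m \geq 2$. The plan is to show that the complement of any compact set in $N$ has exactly one unbounded path-component. Connectedness of $G/H$ makes $N$ connected; since the fibers are connected, $K/K_H$ is then connected. (If $G/H$ is not given a metric, ``one-ended'' is understood topologically. When it is used later through Lemma~\ref{number}, the relevant metric will be one quasi-isometric to $\Ga$ via the cocompact proper action, and ends are a topological invariant of such spaces.)

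\textbf{Step 1: exhaustion.} Fix a $K_H$-invariant inner product on $\fp\cap\fq$, for instance the restriction of $B_\th$. This gives a well-defined $K$-invariant norm function $r([k,v]) = \V{v}$ on $N$, which is continuous and proper because $K/K_H$ is compact. Hence every compact subset $C\subset N$ lies in some sublevel set $\{r\leq R\}$, and the complement of $\{r\le R\}$ is
\[ \{r > R\} \cong K\times_{K_H}\{v : \V{v} > R\}, \]
a bundle over $K/K_H$ with fiber $\{v\in\fp\cap\fq : \V{v}>R\}$. Since $m\geq 2$, this fiber is homeomorphic to $S^{m-1}\times(R,\infty)$ and is connected. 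A fiber bundle with connected fibers over a connected base has connected total space, so $\{r>R\}$ is path-connected.

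\textbf{Step 2: counting unbounded components.} Let $C$ be compact and choose $R$ with $C\subset\{r\le R\}$. Every unbounded path-component of $N\setminus C$ leaves $\{r\le R\}$, since that set is compact, and therefore meets $\{r>R\}$. The set $\{r>R\}$ is path-connected and contained in $N\setminus C$, so it lies inside a single path-component of $N\setminus C$. Consequently all unbounded components coincide, and there is at least one because $N$ is noncompact ($m\ge1$). The number of ends is therefore $1$.

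\textbf{Main obstacle.} The topology is straightforward; the delicate point is making the notion of ends consistent with the metric definition quoted from \cite{GGT}. That definition presupposes a proper geodesic metric, so I would equip $G/H$ with the pullback of a complete Riemannian metric on $N$ (for example, a $K$-invariant one), for which compact sets are exactly the closed bounded sets. The ends count above uses only compact sets and path-components, so it holds for any such metric. The role of $m>1$ is exactly that $\{v:\V{v}>R\}$ is connected; when $m=1$ this set has two components and the space may be two-ended.
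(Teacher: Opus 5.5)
Your proof is correct and follows essentially the same route as the paper: the diffeomorphism $N = K\times_{K_H}(\fp\cap\fq)\cong G/H$, exhaustion by the compact sets $K\times_{K_H}\{\Vert v\Vert\le R\}$, and connectedness of their complements as bundles over the connected base $K/K_H$ with fiber $\{\Vert v\Vert>R\}$, which is connected because $m\ge 2$. Your Step 2 spells out the reduction from an arbitrary compact set to these sublevel sets, which the paper only asserts with ``it suffices''.
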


		\begin{proof}
		Assume that $G/H$ is a reductive homogeneous space,  $K$ and $K_H$ are maximal compact subgroups of $G$ and $H$ respectively. By Theorem~\ref{dec}, the map
		\[\Phi: N = K \times_{K_H} (\fp \cap \fq) \rtar G/ H, \qquad [k, v] \mapsto k \exp(v) \cdot H\]
is a diffeomorphism, from which we see directly that
		\[\dim (\fp \cap \fq) = \dim (G/H) - \dim (K/K_H) \ .\]

		Denote $V:= \fp \cap \fq$ and $\V{\cdot}$ the norm on $V$ induced by $B_\th$ defined in Section~\ref{Pre}. Note that $\V{\cdot}$ is $K$-invariant since $B_\th$ is $K$-invariant.  
		
		Define $D_R := K \times_{K_H} \{v\in V: \V{v} \leq R\}$, and the $K$-invariance of $\V{\cdot}$ shows that $D_R$ is well-defined. Since both $K$ and $\{v\in V: \V{v} \leq R\}$ are compact, $D_R$ is compact. Moreover, $\{D_R\}_{R=1}^\infty$ is an exhausted sequence of compact subset of $G/H$. Thus, it suffices to prove that, for any $R>0$,
		\[N\setminus D_R = K \times_{K_H} \{v\in V: \V{v} > R\}\]
 is connected (i.e. $\pi_0(N \setminus D_R) = 1$).
 
 		Since the fiber $\{v\in V: \V{v} > R\}$ is homotopy equivalent to a sphere and $\dim V > 1$, we see the fibers are connected. The base space $K/K_H$ is connected since $K/K_H$ and $G/H$ have the same homotopy type and $G/H$ is connected by assumption. Recall that the total space of a fiber bundle with connected base space and connected fiber is connected, and this completes the proof.
		\end{proof}

		\noindent\textbf{Lemma \ref{connected G/H}.} Let $G/ H$ be a connected reductive homogeneous spaces, $K_H$ a maximal compact subgroup of $H$ and $K$ a maximal compact subgroup of $G$ containing $K_H$. Assume that 
			\[\dim(G/H) - \dim(K/K_H) > 1\ .\]
If a word hyperbolic subgroup $\Ga < G$ defines a compact quotient of $G/ H$, then the Gromov boundary $\pa_\infty \Ga$ of $\Ga$ is connected.

		\begin{proof}
		Assume that $\Ga$ acts properly discontinuously and cocompactly on $G/H$. Moreover, we assume that $\Ga$ is torsion-free by \hyperref[Selberg]{Selberg's lemma}.
		
		In general the natural pseudo-Riemannian metric on the reductive symmetric space $G/H$ does not define a positive-definite length metric, so  Milnor-\caron{S}varc Lemma cannot be applied directly to it.  However, $M:=\Ga \backslash G/H$ is a compact manifold, we endow $M$ a complete Riemannian metric $g_M$. Denote the projection by $\pi: G/H \rtar M$, the pull-back $\pi^* g_M$ endows $G/H$ a complete Riemannian structure which is $\Ga$-invariant. We denote $(G/H, \pi^* g_M)$ by $(X, g_X)$. Since $g_X$ is a complete Riemannian metric, Hopf-Rinow theorem shows that $(X, d_X)$ is proper where $d_X$ is the distance induced by $g_X$.
		
		By Milnor-\caron{S}varc lemma, since $\Ga$ acts properly discontinuously, cocompactly and by isometries on the proper length space $(X, d_X)$, the Cayley graph of $\Ga$ is quasi-isometric to $(X, d_X)$. The distance $d_X$ on $X = G/H$ induces the same topology, hence by Lemma~\ref{one-ended}, the number of ends of $X$ is exactly $1$, when $\dim(G/H) - \dim(K/K_H) > 1$. Since the Cayley graph of $\Ga$ is quasi-isometric to $(X, d_X)$, then it is also one-ended. Lemma~\ref{fact con} completes the proof.
		\end{proof}


		\subsection{The pseudo-Riemannian hyperbolic space $\mbf{H}^{p, q-1}$}
		We give a quick introduction to pseudo-Riemannian hyperbolic space $\mbf{H}^{p, q-1}$ and $\mbf{H}^{p, q-1}$-convex cocompactness with reference \cite{DGK1, DGK2, SST, BK, HDRMD}. Recall that $\bR^{p, q}$ is $\bR^{p+q}$ equipped with a bilinear form $\ob$ of signature $(p, q)$, and we define the pseudo-Riemannian hyperbolic space
		\[\mbf{H}^{p, q-1} = \bP \l( \l\{ v\in \bR^{p, q} : \ob(v, v) < 0 \r\} \r)\]
as a subset of $\bP(\bR^{p+q})$.   $\mbf{H}^{p, q-1}$ has a natural pseudo-Riemannian structure of signature $(p, q-1)$ with isometry group $\PO(p, q)$ induced by $\ob$ (see,  for example, \cite[Sec.~2.1]{DGK1}). Thus, $\mbf{H}^{p, q-1}$ is a homogeneous space $G/H$ where $G = \PO(p, q)$ and $H = \bP (\O(p, q-1) \times \O(1))$ mentioned in the introduction.

Define the space of oriented negative lines in  $\bR^{p, q}$
	\[\hat{\mbf{H}}^{p, q-1} = \l\{ v\in \bR^{p, q} : \ob(v, v) < 0 \r\} / \bR_{>0}\ \]
as a subset of $\bP_+(\bR^{p+q})$ the space of oriented lines. Recall the boundary at infinity of $\mbf{H}^{p, q-1}$ in $\bP(\bR^{p+q})$, and of $\hat{\mbf{H}}^{p, q-1}$ in  $\bP_+(\bR^{p+q})$:
		\begin{align*}
		\Ein^{p-1, q-1} := \pa_\infty \mbf{H}^{p, q-1} &=  \bP \l( \l\{ v\in \bR^{p, q} \setminus \{0\} : \ob(v, v) = 0 \r\} \r)\ , \\
		\hat{\Ein}^{p-1, q-1}:= \pa_\infty \hat{\mbf{H}}^{p, q-1} &= \l\{ v\in \bR^{p, q} \setminus \{0\} : \ob(v, v) = 0\r\} / \bR_{> 0}\ .
		\end{align*}
The boundary at infinity of $\mbf{H}^{p, q-1}$ is also called \textit{Einstein universe}. We remark that $\hat{\mbf{H}}^{p, q-1}$ (resp. $\hat{\Ein}^{p-1, q-1}$) is a double cover of $\mbf{H}^{p, q-1}$ (resp. $\Ein^{p-1, q-1}$) (see,  for example, \cite[Sec.~2.3]{BK}). The Einstein universe $\Ein^{p-1, q-1}$ carries a natural conformal structure of signature $(p-1, q-1)$, and it induces a conformal structure on $\hat{\Ein}^{p-1, q-1}$. Indeed, for any $[v] \in \Ein^{p-1, q-1}$, we have the following identifications
		\[T_{[v]} \Ein^{p-1, q-1} \cong \Hom([v], [v]^\perp/ [v]) \cong [v]^* \otimes_\bR \l ([v]^\perp/ [v] \r) \ .\]
Any choice of non-zero vector in $[v]$ identify $T_{[v]} \Ein^{p-1, q-1}$ with $[v]^\perp / [v]$, thus defining a quadratic form on $T_{[v]} \Ein^{p-1, q-1}$. Since $[v]$ contains no preferred vector, this construction only yields a conformal structure on $T_{[v]} \Ein^{p-1, q-1}$.

	\begin{prop}[{\cite[Prop.~1.11]{KiTr}}] \label{conformal}
		$\hat{\Ein}^{p-1, q-1}$ is conformally equivalent to 
		\[\l(\mbf{S}^{p-1} \times \mbf{S}^{q-1}, g_{\mbf{S}^{p-1}} \oplus \l(-g_{\mbf{S}^{q-1}}\r) \r)\ ,\]
where $g_{\mbf{S}^{k}}$ is the round metric on $\mbf{S}^k$.
	\end{prop}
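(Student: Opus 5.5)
Fix the Witt-type decomposition $\bR^{p,q} = \bR^{p,0}\oplus\bR^{0,q}$, i.e.\ a positive definite $p$-plane $P$ and its orthogonal negative definite $q$-plane $Q$. We identify $\hat{\Ein}^{p-1,q-1}$ with $\mbf{S}^{p-1}\times\mbf{S}^{q-1}$ explicitly and then compute the pullback of the conformal structure. Write a null vector as $v=x+y$ with $x\in P$, $y\in Q$. Nullity means $\ob(x,x)=-\ob(y,y)$, and this common value is positive because $v\neq 0$. Normalizing by $\bR_{>0}$ so that $\ob(x,x)=1=-\ob(y,y)$ gives a canonical representative. This yields a diffeomorphism
\[\Psi:\mbf{S}^{p-1}\times\mbf{S}^{q-1}\rtar \hat{\Ein}^{p-1,q-1},\quad (x,y)\mapsto [x+y]\ ,\]
where the spheres are the unit spheres of $(P,\ob|_P)$ and $(Q,-\ob|_Q)$. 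The inverse is $[v]\mapsto(x/\V{x},\,y/\V{y})$. It is smooth and well defined because both components of a nonzero null vector are nonzero.

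Next we check that $\Psi$ is conformal. Use the section $s(x,y)=x+y$ of the null cone, which is a lift of $\Psi$. By the description of the conformal structure recalled above, choosing the representative $v=s(x,y)$ identifies $T_{[v]}\Ein^{p-1,q-1}$ with $[v]^\perp/[v]$ carrying the quadratic form induced by $\ob$. Concretely, a tangent vector is represented by $ds(\dot x,\dot y)=\dot x+\dot y$ taken modulo $[v]$. Here $\dot x\perp x$ and $\dot y\perp y$, so $\ob(v,\dot x+\dot y)=\ob(x,\dot x)+\ob(y,\dot y)=0$ and the vector indeed lies in $v^\perp$. The induced quadratic form is
\[\ob(\dot x+\dot y,\dot x+\dot y)=\ob(\dot x,\dot x)+\ob(\dot y,\dot y)=g_{\mbf{S}^{p-1}}(\dot x,\dot x)-g_{\mbf{S}^{q-1}}(\dot y,\dot y)\ .\]
This is well defined modulo $v$ because $v$ is null and orthogonal to $v^\perp$. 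A different positive rescaling of the lift only multiplies the form by a positive function, so $\Psi^*$ of the conformal class is exactly the class of $g_{\mbf{S}^{p-1}}\oplus(-g_{\mbf{S}^{q-1}})$.

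The main point requiring care is that the tangent identification $T_{[v]}\cong\Hom([v],[v]^\perp/[v])$ agrees with the description "derivative of a lift modulo $v$". We verify this directly. A curve $[v(t)]$ in the quadric has lift $v(t)$ with $\ob(v,v)=0$, so $\ob(v,\dot v)=0$ and hence $\dot v\in v^\perp$. The homomorphism sends $v\mapsto\dot v \bmod v$, and changing the lift to $\lambda(t)v(t)$ alters $\dot v$ only by a multiple of $v$ and rescales the result. Finally, $\Psi$ is a bijection onto the double cover; this also shows that $\hat{\Ein}^{p-1,q-1}$ is the full product and not a quotient of it, which completes the proof.
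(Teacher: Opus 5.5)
Your proof is correct and complete. You split $\bR^{p,q}$ orthogonally into a positive definite $p$-plane and a negative definite $q$-plane, normalize null vectors by $\ob(x,x)=1=-\ob(y,y)$ to get the diffeomorphism $\Psi$, and compute the induced form on $v^\perp/v$ through the lift $x+y$, where the cross terms vanish by orthogonality. The paper gives no proof of its own here and simply quotes \cite[Prop.~1.11]{KiTr}, so your argument is the standard direct verification of the cited fact.
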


	\begin{defn}[{\cite[Def. 3.1]{DGK1}}]
		A subset $\t{\La}$ of $\bR^{p, q}$ is \textit{negative} if $\ob(\t{x}, \t{y}) < 0$ for all $\t{x}, \t{y} \in \t{\La}$ with $\t{x} \neq \t{y}$. A subset of $\Ein^{p-1, q-1}$ (resp. $\hat{\Ein}^{p-1, q-1}$) is \textit{negative} if it is the projection to $\Ein^{p-1, q-1}$ (resp. $\hat{\Ein}^{p-1, q-1}$) of a negative subset of $\bR^{p, q}$.
	\end{defn}
		
	The negativity of a subset  naturally induces a strictly $1$-Lipschitz map between spheres. For future reference, we recall the following result from \cite{BK} (see also \cite[Lem.~4.7]{SST}, where such subsets are termed ``positive'' instead).
	
	\begin{prop}[{\cite[Lem. 3.3]{BK}}] \label{Lem 3.3 BK}
		Fix a splitting $\hat{\Ein}^{p-1, q-1} \simeq \mbf{S}^{p-1} \times \mbf{S}^{q-1}$ defined by the choice of a timelike $q$-plane of $\bR^{p, q}$. A subset $\hat{\La}$ of $\hat{\Ein}^{p-1, q-1}$ is negative if and only if, in this splitting, it is the graph of a strictly $1$-Lipschitz map $f: A \rtar \mbf{S}^{q-1}$ for some subset $A$ of $\mbf{S}^{p-1}$.
	\end{prop}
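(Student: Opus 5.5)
The plan is to compute the form $\ob$ explicitly in the splitting and turn negativity into a comparison of spherical distances. Let $T \subset \bR^{p,q}$ be the chosen timelike $q$-plane, on which $\ob$ is negative definite, and let $S := T^\perp$, which is a spacelike $p$-plane. Write every $v \in \bR^{p,q}$ as $v = u + w$ with $u \in S$ and $w \in T$. Put $\V{u}^2 := \ob(u,u)$ and $\V{w}^2 := -\ob(w,w)$, which are Euclidean norms. Then $\ob(v,v) = \V{u}^2 - \V{w}^2$.

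First I identify the model. A nonzero isotropic vector has $\V{u} = \V{w} > 0$. Each ray in $\bR_{>0}\cdot v$ therefore has a unique representative with $\V{u} = \V{w} = 1$, and this gives the identification $\hat{\Ein}^{p-1,q-1} \simeq \mbf{S}^{p-1} \times \mbf{S}^{q-1}$. I take this to be the splitting in the statement, compatible with Proposition~\ref{conformal}. For two such normalized representatives $x = u + w$ and $y = u' + w'$,
\[\ob(x,y) = \langle u, u'\rangle - \langle w, w'\rangle = \cos d_{\mbf{S}^{p-1}}(u,u') - \cos d_{\mbf{S}^{q-1}}(w,w')\ ,\]
where $d$ denotes the round (angular) distance with values in $[0,\pi]$.

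Next I reduce negativity to a condition on normalized representatives. Multiplying $x$ or $y$ by positive scalars does not change the sign of $\ob(x,y)$, so whether a pair of points of $\hat{\Ein}^{p-1,q-1}$ is negative does not depend on the chosen lifts. Moreover, a negative subset of $\bR^{p,q}$ contains no two distinct vectors on the same ray, since $\ob(x, \lambda x) = 0$ for isotropic $x$. Hence its projection is injective. It follows that $\hat{\La}$ is negative if and only if $\ob(x,y) < 0$ for the normalized lifts of every pair of distinct points of $\hat{\La}$. Since $\arccos$ is strictly decreasing on $[-1,1]$, this is equivalent to
\[d_{\mbf{S}^{q-1}}(w,w') < d_{\mbf{S}^{p-1}}(u,u') \quad \text{for all distinct } (u,w), (u',w') \in \hat{\La}\ .\]

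Finally I conclude in both directions. Suppose $\hat{\La}$ is negative. If two distinct points had $u = u'$, the inequality would give $d(w,w') < 0$, which is impossible. So the projection to $\mbf{S}^{p-1}$ is injective on $\hat{\La}$, and $\hat{\La}$ is the graph of a map $f : A \rtar \mbf{S}^{q-1}$, where $A$ is its image in $\mbf{S}^{p-1}$. The inequality then reads $d(f(u), f(u')) < d(u,u')$ for $u \neq u'$, which says $f$ is strictly $1$-Lipschitz. Conversely, the graph of a strictly $1$-Lipschitz map satisfies this inequality for all distinct pairs, so its normalized lifts form a negative subset of $\bR^{p,q}$.

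There is no real obstacle here. The only points needing care are the bookkeeping about lifts: independence of the sign under positive rescaling, and the fact that distinct lifts cannot lie on the same ray. One also has to check that the antipodal case $d = \pi$ fits the strict monotonicity of $\arccos$, which it does.
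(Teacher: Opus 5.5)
Your argument is correct and complete. The paper gives no proof of this statement; it quotes it from \cite[Lem.~3.3]{BK} (see also \cite[Lem.~4.7]{SST}). Your route is the standard computation behind that lemma: you normalize lifts so that $\ob(x,y)=\cos d_{\mbf{S}^{p-1}}(u,u')-\cos d_{\mbf{S}^{q-1}}(w,w')$, and then use the strict monotonicity of $\cos$ on $[0,\pi]$.
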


	\paragraph{The $\mbf{H}^{p, q-1}$-convex cocompactness.} A subset of projective space is said to be \textit{convex} if it is contained and convex in some affine chart. A subset of projective space is said to be \textit{properly convex} if its closure is convex.

	\begin{defn}[{\cite[Def.~1.2]{DGK1}}] \label{cc}
		A discrete subgroup $\Ga$ of $\PO(p, q)$ is \textit{$\mbf{H}^{p, q-1}$-convex cocompact} if it acts properly discontinuously and cocompactly on some properly convex closed subset $\calC$ of $\mbf{H}^{p, q-1}$ with nonempty interior whose ideal boundary $\pa_i \calC := \Over{\calC} \setminus \calC$ does not contain any nontrivial projective segment.
	\end{defn}

	\begin{defn}[{\cite[Def.~2.3]{DGK1}}]
			Let $\Ga$ be a discrete subgroup of $\PGL(V)$. The \textit{proximal limit set of $\Ga$ in $\bP(V)$} is the closure $\La_\Ga$ of the set of attracting fixed points of elements of $\Ga$ which are proximal in $\bP(V)$.
	\end{defn}

	\begin{theo}[{\cite[Th.~1.24]{DGK2}}] \label{cc-Anosov}
		Let $p, q \geq 2$. For an infinite discrete subgroup $\Ga$ of $\PO(p, q)$, the following are equivalent:
		\begin{itemize}
			\item[(1)] $\Ga$ is $\mbf{H}^{p, q-1}$-convex cocompact.
			\item[(2)] $\Ga$ is word hyperbolic, the inclusion $\Ga \hrtar \PO(p, q)$ is $P_1^{p, q}$-Anosov, and the proximal limit set $\La_\Ga \subset \Ein^{p-1, q-1}$ is negative.
		\end{itemize}
	\end{theo}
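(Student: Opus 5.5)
This is a result quoted from \cite{DGK2}, and I would prove it by treating the two implications separately. The link between them is the elementary identity: for lifts $\t{x}, \t{y}$ of two isotropic points,
\[\ob\big(t\t{x} + (1-t)\t{y},\, t\t{x}+(1-t)\t{y}\big) = 2t(1-t)\,\ob(\t{x}, \t{y}) \qquad (0 < t < 1).\]
So the open projective segment between two points of $\Ein^{p-1,q-1}$ lies in $\mbf{H}^{p,q-1}$ exactly when the lifts can be chosen with $\ob(\t{x},\t{y})<0$. In other words, negativity of a set of boundary points means that all segments between them pass through $\mbf{H}^{p,q-1}$.

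\textbf{(1) $\Rightarrow$ (2).} Let $\calC$ be as in Definition~\ref{cc}.
\begin{itemize}
\item \emph{Hyperbolicity.} I would replace $\calC$ by a $\Ga$-invariant properly convex open neighbourhood $\Omega$ of it and use the Hilbert metric $d_\Omega$. Since $\Ga$ acts cocompactly on the convex set $\calC$, it is quasi-isometric to $(\calC, d_\Omega)$. The absence of nontrivial segments in $\pa_i \calC$ should then give uniformly thin triangles. The argument is by contradiction: fat triangles, rescaled and translated back into a compact fundamental domain by $\Ga$, converge to a triangle with an edge inside $\pa_i\calC$, which is forbidden.
\item \emph{Negativity.} The proximal limit set $\La_\Ga$ is contained in $\pa_i\calC$. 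For distinct $x, y \in \pa_i \calC$, the segment $[x,y]$ lies in $\Over{\calC}$ and is not contained in $\pa_i\calC$. Hence its interior meets $\calC \subset \mbf{H}^{p,q-1}$, and then lies entirely in $\mbf{H}^{p,q-1}$ by convexity. Moreover $\Over{\calC}$ lies in an affine chart, so its points admit a consistent choice of lift. The identity above then gives $\ob(\t x,\t y)<0$ for this choice.
\item \emph{Anosov property.} I would use criterion (2) of Theorem~\ref{theo 1 Anosov} with $\Th=\{\al_1\}$. The boundary map $\xi:\pa_\infty\Ga\rtar \pa_i\calC\subset\Ein^{p-1,q-1}$ comes from the quasi-isometry above. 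Negativity gives $\ob(\t x,\t y)\neq 0$, i.e.\ $x\notin y^\perp$, which is transversality in $G/P_1$. For the linear growth of $\al_1(\mu(\ga))$, I would compare $d_\Omega(o,\ga o)$ with $\log$ of the ratio of the top two singular values of $\ga$.
\end{itemize}

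\textbf{(2) $\Rightarrow$ (1).} Since $\La_\Ga$ is negative, it lifts to a negative set $\t\La \subset \bR^{p,q}$. I would take $\calC_0$ to be the projectivized convex hull of $\t\La$ minus $\La_\Ga$.
\begin{itemize}
\item \emph{$\calC_0$ lies in $\mbf{H}^{p,q-1}$.} A nontrivial convex combination of pairwise negative isotropic vectors has negative norm, since all cross terms are negative.
\item \emph{Proper convexity.} I expect this to follow from negativity, which confines $\t\La$ to an open half-space.
\item \emph{Nonempty interior.} I would replace $\calC_0$ by a closed uniform neighbourhood of it for a $\Ga$-invariant metric. This stays properly convex inside $\mbf{H}^{p,q-1}$ if the neighbourhood is small.
\item \emph{Ideal boundary.} The ideal boundary of the resulting set is $\La_\Ga$. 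It contains no segment, because the interior of any segment between two negative points lies in $\mbf{H}^{p,q-1}$.
\item \emph{Cocompactness.} I expect this to be the main obstacle. Suppose $z_n\in\calC$ have $\Ga$-orbits leaving every compact set. Using the convergence-group dynamics of a $P_1$-Anosov action (attracting points in $\xi^1$, repelling hyperplanes $\xi^{d-1}$, uniformly transverse), I would translate the $z_n$ so that they stay uniformly away from the repelling hyperplanes of a fixed finite family of elements. Any limit in $\pa_i \calC=\La_\Ga$ would then be transverse to all of $\La_\Ga$, contradicting the fact that it lies in $\La_\Ga$. What I expect to be subtle here is choosing the lift $\t\La$ consistently when $\La_\Ga$ is disconnected.
\item \emph{Proper discontinuity.} This follows from the Anosov contraction property.
\end{itemize}
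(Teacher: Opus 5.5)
The paper gives no proof of this statement. It is imported from \cite{DGK2}, where it comes out of the authors' general theory of convex cocompactness in $\mathbb{P}(V)$. So your outline has to stand on its own, and two load-bearing steps are missing.

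In (1)$\Rightarrow$(2), comparing $d_\Omega(o,\ga o)$ with singular values cannot produce the Anosov gap. Up to a bounded error, $d_\Omega(o,\ga o)$ equals $\tfrac12\log(\sigma_1/\sigma_{p+q})(\ga)=\varep_1(\mu(\ga))$, whereas you need growth of $\al_1(\mu(\ga))=(\varep_1-\varep_2)(\mu(\ga))$. Nothing in your argument for the Anosov property uses that $\calC$ has nonempty interior, and without that hypothesis the conclusion fails. Take a cocompact surface group $\Ga<\SO_0(2,1)$, embedded diagonally in $\SO(2,1)\times\SO(2,1)\subset\SO(4,2)$. It acts properly and cocompactly on $\calC=\mathbb{P}\{(u,u):u\in\mathcal{F}\}$, where $\mathcal{F}\subset\bR^{2,1}$ is the open future cone. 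This $\calC$ is a closed properly convex subset of $\mbf{H}^{4,1}$ inside the invariant properly convex domain $\mathbb{P}(\mathcal{F}\times\mathcal{F})$, and its ideal boundary is a negative circle with no segment. Your hyperbolicity, boundary-map and transversality steps all go through, yet $\varep_1=\varep_2$ on $\mu(\Ga)$.

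The missing idea uses the interior. Suppose $\ga_n\rtar\infty$ with $\al_1(\mu(\ga_n))$ bounded. Then a limit $A$ of the $\ga_n$ in $\mathbb{P}(\operatorname{End}\bR^{p+q})$ has rank $\geq 2$. For a small ball $U$ with $\Over{U}\subset\mathrm{Int}\,\calC\setminus\mathbb{P}(\ker A)$, proper discontinuity forces $A(U)\subset\pa_i\calC$. But $A(U)$ is open in $\mathbb{P}(\operatorname{Im}A)$, so it contains a segment, a contradiction. This gives $\al_1(\mu(\ga))\rtar\infty$. Combined with your boundary map (whose dynamics-preserving property you still have to check), you conclude via a divergence form of the Anosov criterion \cite{GGKW, KLP}, not the linear-growth form of Theorem~\ref{theo 1 Anosov}(2). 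You also assert without proof that an invariant properly convex open set $\Omega\supset\calC$ exists.

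In (2)$\Rightarrow$(1), your cocompactness argument is circular. Keeping the translates a definite distance from finitely many repelling hyperplanes $\mathbb{P}(\xi^1(\eta_i)^\perp)$ gives no control over the others. Moreover, each $\xi^1(\eta)\in\La_\Ga$ lies in its own hyperplane $\mathbb{P}(\xi^1(\eta)^\perp)$. Being uniformly transverse to all of $\La_\Ga$, i.e.\ at distance $\geq\epsilon$ from $\bigcup_\eta\mathbb{P}(\xi^1(\eta)^\perp)$, is equivalent to lying in a fixed compact subset of $\calC_0$, which is exactly what must be proved. A mechanism that does work is expansion. For $\ga$ far along a geodesic ray towards $\eta$, $\ga^{-1}$ expands the metric of $\mathbb{P}(\bR^{p+q})$ by a factor $\gtrsim e^{\al_1(\mu(\ga))}$ on a neighbourhood of $\xi^1(\eta)$, by uniform transversality. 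Cover $\La_\Ga$ by finitely many such neighbourhoods; Sullivan's argument then pushes every point of $\calC_0$ to distance $\geq\epsilon$ from $\La_\Ga$. That set is compact in $\calC_0$ because $\Over{\calC_0}=\calC_0\sqcup\La_\Ga$.

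Your thickening step also needs repair, since uniform neighbourhoods of a convex set for an invariant metric need not be convex. Instead, take the convex hull of a small invariant neighbourhood and check three things:
\begin{itemize}
\item its points still have pairwise negative lifts, so it lies in $\mbf{H}^{p,q-1}$;
\item its closure meets $\Ein^{p-1,q-1}$ only in $\La_\Ga$;
\item it is cocompact, by the same expansion argument.
\end{itemize}
Finally, the lift is not a subtle point. As soon as $\La_\Ga$ has two points, negativity determines $\t\La$ up to a global sign and positive rescalings, whether or not $\La_\Ga$ is connected.
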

	\begin{rem}
	When $\Ga$ is $\mbf{H}^{p, q-1}$-convex cocompact, the induced boundary map is $\xi^1: \pa_\infty \Ga \rtar \Ein^{p-1, q-1} = G/ P^{p, q}_1$ and we have $\xi^1(\pa_\infty \Ga) = \La_\Ga \subset \Ein^{p-1, q-1}$ (see, for example,  \cite[Th.~1.15]{DGK2}).
	\end{rem}
	
	We cite a theorem which gives restrictions on the Gromov boundary $\pa_\infty \Ga$ of an $\mbf{H}^{p, q-1}$-convex cocompact subgroup.
	\begin{theo}[{\cite[Coro.~11.10]{DGK2}}] \label{res}
		Let $p, q\geq 1$.
		\begin{itemize}
			\item[(1)] Let $\Ga$ be an infinite discrete subgroup of $\PO(p, q)$ which is $\mbf{H}^{p, q-1}$-convex cocompact. Then $\Ga$ is word hyperbolic and has virtual cohomological dimension at most $p$. Moreover, the virtual cohomological dimension of $\Ga$ is $p$ if and only if the Gromov boundary $\pa_\infty \Ga$ is homeomorphic to a $(p-1)$-sphere.
			\item[(2)] Let $\Ga$ be an infinite word hyperbolic group with connected Gromov boundary $\pa_\infty \Ga$ and virtual cohomological dimension larger than $q$. If there is a $P_1^{p, q}$-Anosov representation $\rho: \Ga \rtar \PO(p, q)$, then $p > q$ and the group $\rho(\Ga)$ is $\mbf{H}^{p, q-1}$-convex cocompact.
		\end{itemize}
	\end{theo}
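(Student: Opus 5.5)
For part (1), I would first use Theorem~\ref{cc-Anosov}: an $\mbf{H}^{p, q-1}$-convex cocompact group $\Ga$ is word hyperbolic and $P_1$-Anosov, and its limit set $\La_\Ga=\xi^1(\pa_\infty\Ga)$ is negative. Next I would lift $\La_\Ga$ to a negative subset $\hat\La$ of $\hat{\Ein}^{p-1, q-1}$, choosing the lift coming from a negative subset of $\bR^{p,q}$. Fixing a timelike $q$-plane, Proposition~\ref{Lem 3.3 BK} writes $\hat\La$ as the graph of a strictly $1$-Lipschitz map $f:A\rtar\mbf{S}^{q-1}$ with $A\subset \mbf{S}^{p-1}$ compact. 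Projection to the first factor is then a homeomorphism $\pa_\infty\Ga\simeq\hat\La\simeq A\subset\mbf{S}^{p-1}$, so the covering dimension of $\pa_\infty\Ga$ is at most $p-1$. The Bestvina--Mess formula $\opnm{vcd}(\Ga)=\dim\pa_\infty\Ga+1$ (after passing to a torsion-free finite-index subgroup via Lemma~\ref{Selberg}) gives $\opnm{vcd}(\Ga)\le p$.

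For the equality case, if $\pa_\infty\Ga\simeq\mbf{S}^{p-1}$ then $\dim\pa_\infty\Ga=p-1$ and Bestvina--Mess gives $\opnm{vcd}=p$. Conversely, if $\opnm{vcd}=p$ then $A$ is a $(p-1)$-dimensional compact subset of $\mbf{S}^{p-1}$, so it has nonempty interior. Hence $\pa_\infty\Ga$ contains an open set homeomorphic to $\bR^{p-1}$. By minimality of the $\Ga$-action on $\pa_\infty\Ga$ and compactness, finitely many translates of this open set cover $\pa_\infty\Ga$, so $\pa_\infty\Ga$ is a closed topological $(p-1)$-manifold. Invariance of domain then forces $A$ to be open and closed in $\mbf{S}^{p-1}$, so $A=\mbf{S}^{p-1}$.

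For part (2), let $\xi^1$ be the $P_1$-Anosov boundary map. Transversality means that $\ob(\t x,\t y)\neq 0$ for lifts $\t x,\t y$ of $\xi^1(x)\neq\xi^1(y)$. Since $\pa_\infty\Ga$ is connected, I would show that the space of pairs of distinct points is connected, and that $\xi^1$ admits a continuous lift to $\hat{\Ein}^{p-1,q-1}$, possibly after passing to a finite-index subgroup and replacing $\rho$ by a lift into $\O(p,q)$. Then $\ob(\t\xi(x),\t\xi(y))$ has constant sign on distinct pairs, so the lifted limit set is either negative or positive. If it is positive, then the same subset is negative for $-\ob$, which has signature $(q,p)$. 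Theorem~\ref{cc-Anosov} then makes $\rho(\Ga)$ $\mbf{H}^{q,p-1}$-convex cocompact, and part (1) gives $\opnm{vcd}\le q$, contradicting the hypothesis. So the limit set is negative, $\rho(\Ga)$ is $\mbf{H}^{p,q-1}$-convex cocompact, and part (1) gives $q<\opnm{vcd}(\Ga)\le p$, hence $p>q$.

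The main obstacle is the lifting step in (2). One must show that the preimage of $\xi^1(\pa_\infty\Ga)$ in the double cover $\hat{\Ein}^{p-1,q-1}$ splits as two sheets, each meeting each fibre once, and that the sign of $\ob$ is constant on a sheet. I would try to use transversality: for fixed lifts, $\ob$ never vanishes, so a local sign choice propagates. A loop in $\pa_\infty\Ga$ along which the lift flips would force a sign change of $\ob(\t\xi(x),\t\xi(y))$ for some fixed $y$ off the loop, which is impossible by the intermediate value theorem. The case where every loop meets every point needs separate care, for instance by working with triples of points or using that $\pa_\infty\Ga$ is not a circle when $\opnm{vcd}>2$.
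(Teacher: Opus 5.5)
The paper gives no proof of this statement; it is imported from \cite{DGK2}, so I am judging your argument on its own terms. Part (1) is correct. In (2), the reduction to the dichotomy ``$\La:=\xi^1(\pa_\infty\Ga)$ is negative or positive'', followed by (1) in signatures $(q,p)$ and $(p,q)$, is also correct. The gap is that this dichotomy, which is the real content of (2), is never proved. Your intermediate-value argument only excludes monodromy along loops that miss some point. Every Peano continuum carries loops with full image, and you give no reason why $\pi_1(\pa_\infty\Ga)$ should be generated by non-surjective loops. For $\mbf{S}^1$ it is not, and excluding the circle does not by itself settle the general case. Passing from trivial monodromy to a continuous section of $\hat{\Ein}^{p-1,q-1}\rtar\Ein^{p-1,q-1}$ over $\La$ would also need local path-connectedness of $\pa_\infty\Ga$. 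This holds here by Bestvina--Mess, Bowditch and Swarup, but you do not mention it. Connectedness of the space of pairs is likewise only asserted. Finally, passing to a finite-index subgroup or lifting $\rho$ to $\O(p,q)$ cannot help: neither changes $\La$, and equivariance plays no role in negativity.

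The missing idea is that you never need a global continuous lift in advance.
\begin{itemize}
\item Choose $y\in\La$ such that $\La\smallsetminus\{y\}$ is connected. A nondegenerate continuum always has non-cut points, and in fact $\pa_\infty\Ga$ has no cut points at all.
\item Fix a lift $\tilde y$. For $x\in\La\smallsetminus\{y\}$, let $s(x)$ be the lift of $x$ with $\ob(s(x),\tilde y)<0$. Comparing with local sections of the double cover and using transversality shows that $s$ is continuous.
\item Hence $\sigma(x,z):=\operatorname{sign}\ob(s(x),s(z))$ is a continuous, symmetric, $\{\pm1\}$-valued function on pairs of distinct points of the connected space $X:=\La\smallsetminus\{y\}$.
\item Such a function is constant. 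Suppose $\sigma(x,\cdot)$ took both values. Then $X\smallsetminus\{x\}=P\sqcup Q$ would be a separation with $\sigma(x,\cdot)=-1$ on $P$ and $+1$ on $Q$. Since $P\cup\{x\}$ and $Q\cup\{x\}$ are connected, for $a\in P$ and $b\in Q$ you would get both $\sigma(a,b)=\sigma(x,b)=1$ and $\sigma(a,b)=\sigma(a,x)=-1$. So each $\sigma(x,\cdot)$ is constant, and symmetry forces all these constants to coincide.
\item If $\epsilon$ is the common value, then $\{-\epsilon\tilde y\}\cup s(\La\smallsetminus\{y\})$ is a negative lift of $\La$ when $\epsilon=-1$ and a positive one when $\epsilon=1$. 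This is all the definition requires.
\end{itemize}
Your hint about triples is the lift-free form of the same argument. The sign of $\ob(\tilde x,\tilde y)\ob(\tilde y,\tilde z)\ob(\tilde z,\tilde x)$ does not depend on the lifts, and it is identically $-1$ (resp.\ $+1$) exactly when $\La$ is negative (resp.\ positive).
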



	\section{Topological obstruction for Anosov representations}
	In this section, we prove our main theorem. Due to Proposition~\ref{leq p}, it suffices to prove that $\opnm{vcd}(\Ga) \neq p$. 
			
		\subsection{Hyperconvexity and $\calC^1$ Lemma for $\PO(p, q)$}

		The crucial observation is the $\calC^1$-regularity of the limit set:
				
		\begin{prop} \label{is cc}  \label{Prop hyper popq} \label{lem C1}
			Let $p > q$ be two integers, $\Ga$ a discrete group with $\opnm{vcd} = p$ and $\rho: \Ga \rtar \PO(p, q)$ a morphism. If $\rho$ is $\{P_1, P_q\}$-Anosov, then $\rho$ is $\mbf{H}^{p, q-1}$-convex cocompact, the limit set $\La_\rho = \xi_1(\pa \Ga)$ in $\Ein^{p-1, q-1}$ is a $\calC^1$ negative $(p-1)$-sphere and $T_z \La_\rho$ contains a $(q-1)$-dimensional isotropic subspace for any $z\in \La_\rho$.
		\end{prop}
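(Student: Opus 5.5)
The argument has three stages: convex cocompactness, the sphere and $\calC^1$ regularity, and the isotropic subspace in the tangent space.

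\textbf{Convex cocompactness.} Assume $\opnm{vcd}(\Ga)=p$. By Selberg's lemma (Lemma~\ref{Selberg}) we may pass to a torsion-free finite-index subgroup. Since $\rho$ is $P_q$-Anosov, it is in particular discrete with finite kernel. The Kassel--Tholozan criterion (Theorem~\ref{theo 1.10 KT}) applied to $\mbf{H}^{p,q-1}$ then shows that $\rho(\Ga)$ acts properly discontinuously and cocompactly on $\mbf{H}^{p,q-1}$. Here we need $p-1+q-1 = \dim$ minus $\dim(K/K_H)$, i.e.\ the dimension count $\dim(\fp\cap\fq)=p>1$. Corollary~\ref{connected} therefore gives that $\pa_\infty\Ga$ is connected. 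Since $p>q$, we have $\opnm{vcd}(\Ga)=p>q$, and $\rho$ is $P_1$-Anosov, so Theorem~\ref{res}(2) shows that $\rho(\Ga)$ is $\mbf{H}^{p,q-1}$-convex cocompact. Theorem~\ref{res}(1) then gives $\pa_\infty\Ga\cong \mbf{S}^{p-1}$. By Theorem~\ref{cc-Anosov} and the subsequent remark, $\La_\rho=\xi^1(\pa_\infty\Ga)$ is negative and is a homeomorphic image of $\mbf{S}^{p-1}$.

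\textbf{$\calC^1$ regularity via hyperconvexity.} We view $\rho$ in $\PGL_{p+q}(\bR)$. The $\{P_1,P_q\}$-Anosov property in $\PO(p,q)$ yields a $\{P_1,P_{p}\}$-Anosov representation into $\PGL_{p+q}(\bR)$ in the sense of \cite{PSW}. The relevant flags are $\xi^1$, $\xi^{q}$ (isotropic), and $\xi^{p}(x):=\xi^q(x)^\perp$, of dimension $p$; its dual partner $\xi^{d-p}=\xi^{q}$ has dimension $q$. We need to check that the Cartan projection gaps $\mu_1-\mu_2$ and $\mu_p-\mu_{p+1}$ grow linearly. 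In $\PO(p,q)$ the singular values are $t_1\ge\dots\ge t_q\ge 0=\dots=0\ge -t_q\ge\dots$, so the $p$-th gap is $t_q=\al_q(\mu)$ (or $2t_q$ in log scale), which grows linearly by the $P_q$-Anosov assumption. The next step, and the \emph{main obstacle}, is $(1,1,p)$-hyperconvexity: for pairwise distinct $x,y,z$ we need $(\xi^1(x)\oplus\xi^1(y))\cap\xi^q(z)=0$. Suppose $v=a u_x+b u_y\in\xi^q(z)$ is nonzero, where $u_x,u_y$ are lifts in a negative lift of the limit set. Then $\ob(v,v)=2ab\,\ob(u_x,u_y)$. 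Since $\xi^q(z)$ is isotropic, this forces $ab=0$, so $v$ is a multiple of, say, $u_x$, and hence $\xi^1(x)\subset\xi^q(z)$. But $\xi^1(z)\subset\xi^q(z)$ as well, so $u_x,u_z$ span an isotropic plane, contradicting $\ob(u_x,u_z)<0$. Thus negativity gives hyperconvexity. Proposition~\ref{Prop hyper} then shows that $\La_\rho$ is a $\calC^1$ submanifold with $T_{\xi^1(x)}\La_\rho=T_{\xi^1(x)}\bP(\xi^p(x))$.

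\textbf{Isotropic subspace in the tangent space.} Under the identification $T_{[v]}\Ein\cong\Hom([v],[v]^\perp/[v])$, with $v=\xi^1(x)$, the tangent space $T\La_\rho$ corresponds to $\xi^p(x)/\xi^1(x)=\xi^q(x)^\perp/\xi^1(x)$. This is $(p-1)$-dimensional, consistent with $\La_\rho$ being a $(p-1)$-sphere. It contains $\xi^q(x)/\xi^1(x)$, because $\xi^q$ is isotropic and therefore lies in its own orthogonal. This subspace is $(q-1)$-dimensional and isotropic for the induced conformal form on $[v]^\perp/[v]$, which gives the claimed isotropic subspace of $T_z\La_\rho$.
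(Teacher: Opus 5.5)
Your proposal is correct and follows the paper's proof. The paper likewise uses the Kassel--Tholozan criterion, Corollary~\ref{connected} and Theorem~\ref{res} for convex cocompactness, negativity and the sphere; $(1,1,p)$-hyperconvexity with Proposition~\ref{Prop hyper} for the $\calC^1$ structure; and $\xi^q(x)/\xi^1(x)$ inside $\xi^q(x)^\perp/\xi^1(x)$ for the isotropic $(q-1)$-plane. The one small difference is that you deduce hyperconvexity from the negativity of $\La_\rho$ (via $\ob(u_x,u_z)<0$), whereas the paper's Lemma~\ref{lem C1 popq} uses only transversality of the boundary maps, so it holds for every $\{P_1,P_q\}$-Anosov representation without assuming $\opnm{vcd}(\Ga)=p$.
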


		We observe that for a $\{P_1, P_q\}$-Anosov representation $\rho: \Ga \rtar \PO(p, q)$, $\rho$ is automatically $(1, 1, p)$-hyperconvex.
		\begin{lem} \label{lem C1 popq}
			For $p > q$, if $\rho: \Ga \rtar \PO(p, q)$ is $\{P_1, P_q\}$-Anosov, then $\rho$ is $(1, 1, p)$-hyperconvex. 
		\end{lem}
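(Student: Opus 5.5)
The plan is to view $\rho$ as a representation into $\PGL_d(\bR)$ with $d = p+q$ via the inclusion $\PO(p,q) \subset \PGL_{p+q}(\bR)$. First I would check that it is $\{P_1, P_p\}$-Anosov there, and identify its boundary maps in terms of the $\PO(p,q)$ boundary maps. Then the hyperconvexity condition of Definition~\ref{Defn hyper} reduces to a short linear-algebra argument using isotropy and transversality.

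\emph{Step 1: Anosov property in $\PGL_d(\bR)$.} For $g \in \PO(p,q)$ with $\mu(g) = H(t)$, $t_1 \geq \cdots \geq t_q \geq 0$, the logarithms of the singular values of $g$ in $\PGL_d(\bR)$ are
\[t_1, \ldots, t_q, 0, \ldots, 0, -t_q, \ldots, -t_1.\]
Hence the gaps satisfy $\log(\sigma_1/\sigma_2) = \al_1(\mu(g))$ and $\log(\sigma_q/\sigma_{q+1}) = \log(\sigma_p/\sigma_{p+1}) = t_q = \al_q(\mu(g))$. The same identity holds when $q=1$, and for $q \geq 2$ one has $\al_1 = \varep_1 - \varep_2$, which matches $\log(\sigma_1/\sigma_2)$.

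By condition (2) of Theorem~\ref{theo 1 Anosov}, $\rho$ is therefore $\{P_1, P_q, P_p, P_{d-1}\}$-Anosov in $\PGL_d(\bR)$. The boundary maps are
\[\xi^1, \quad \xi^q, \quad \xi^p = (\xi^q)^\perp, \quad \xi^{d-1} = (\xi^1)^\perp,\]
where $\xi^1$ and $\xi^q$ are the $\PO(p,q)$ boundary maps into $\IsGr_1$ and $\IsGr_q$. This follows from the uniqueness of equivariant dynamics-preserving boundary maps. In particular $\xi^{d-p} = \xi^q$ takes values in isotropic $q$-planes, and $\xi^1(x) \subset \xi^q(x)$ by compatibility of the flag-valued boundary map.

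\emph{Step 2: the hyperconvexity condition.} Fix pairwise distinct $x, y, z \in \pa_\infty \Ga$ and choose nonzero vectors $u_x \in \xi^1(x)$ and $u_y \in \xi^1(y)$. Suppose $v = a u_x + b u_y \in \xi^q(z)$ with $v \neq 0$.

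Transversality of $\xi^1(x)$ and $\xi^q(z)$ means $\xi^1(x) \oplus \xi^q(z)^\perp = \bR^{p,q}$, so $\xi^1(x) \not\subset \xi^q(z)^\perp$. Since $\xi^q(z)$ is isotropic, $\xi^q(z) \subset \xi^q(z)^\perp$, and hence $u_x \notin \xi^q(z)$. The same argument gives $u_y \notin \xi^q(z)$. It follows that $a \neq 0$ and $b \neq 0$.

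Because $\xi^q(z)$ is isotropic and $u_x, u_y$ are isotropic, we get
\[0 = \ob(v,v) = 2ab\,\ob(u_x,u_y).\]
But $\xi^1(x)$ and $\xi^1(y)$ are transverse as $P_1$-points, i.e.\ $\xi^1(x) \not\subset \xi^1(y)^\perp$, so $\ob(u_x,u_y) \neq 0$. This is a contradiction, so $(\xi^1(x) \oplus \xi^1(y)) \cap \xi^{d-p}(z) = \{0\}$.

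\emph{Main obstacle.} The only delicate point is Step 1: making sure the $\PGL_d$ boundary maps really are $\xi^q$ and $(\xi^q)^\perp$, and that the needed gap conditions hold. For the gaps I would invoke the singular value computation above together with Theorem~\ref{theo 1 Anosov}. For the identification of the maps I would use uniqueness of dynamics-preserving equivariant maps, noting that $\xi^q(x)^\perp$ is the attracting $p$-dimensional subspace for proximal elements. Note that $p > q$ is used implicitly: it makes $P_q$ and $P_p$ distinct in $\PGL_d(\bR)$, so that the $(1,1,p)$ condition is stated with $\xi^{d-p} = \xi^q$.
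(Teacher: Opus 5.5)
Your proposal is correct and follows the paper's proof: pass to $\PGL_{p+q}(\bR)$, where your singular-value check of the $\{P_1,P_p\}$-Anosov property adds a detail the paper skips. Then, as in the paper, use the isotropy of $\xi^q(z)$ and $\ob(au+bv,au+bv)=2ab\,\ob(u,v)$ with $\ob(u,v)\neq 0$ and $a,b\neq 0$ by transversality. One slip to fix: for $q\geq 2$ the equality $\xi^1(x)\oplus\xi^q(z)^\perp=\bR^{p,q}$ cannot hold, since the dimensions are $1+p<p+q$. What you actually use, $u_x\notin\xi^q(z)$, follows from $\xi^q(x)\oplus\xi^q(z)^\perp=\bR^{p,q}$ together with $\xi^1(x)\subset\xi^q(x)$, or directly from $\ob(u_x,u_z)\neq 0$ for $0\neq u_z\in\xi^1(z)\subset\xi^q(z)$ with $\xi^q(z)$ isotropic.
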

		
		\begin{proof}
			By natural inclusion $\iota: \PO(p, q) \hrtar \PGL_{p+q} \bR$, the representation $\iota \circ \rho: \Ga \rtar \PGL_{p+q} \bR$ is $\{P_1, P_q\}$-Anosov. The limit maps are $\xi^k: \pa_\infty \Ga \rtar \IsGr_k \bR^{p, q}$ for $k = 1, q$.
			
			By definition it suffices to prove $(\xi^1(x) \oplus \xi^1(y)) \cap \xi^q (z) = \{0\}$ for any $x, y, z \in \pa_\infty \Ga$ pairwise distinct. For any nonzero vectors $u\in \xi^1(x)$ and $v \in \xi^1(y)$, by transversality we have $\ob(u, v)\neq 0$. Note that $\xi^q(z)$ is isotropic, and $\ob(au + bv, au+ bv) = 2ab \ob(u, v)$, then $au + bv \in \xi^q(z)$ only if $a= 0$ or $b = 0$, both are impossible by the transversality of boundary maps.
		\end{proof}

		\begin{proof}[Proof of Proposition~\ref{is cc}]		
			By Kassel--Tholozan criterion, $\Ga$ defines a compact quotient of $\mbf{H}^{p, q-1}$, hence by Corollary~\ref{connected}, the Gromov boundary $\pa_\infty \Ga$ is connected. By Theorem~\ref{res}, we  say $\rho$ is $\mbf{H}^{p, q-1}$-convex cocompact and the limit set $\La_\rho \subset \Ein^{p-1, q-1}$ is negative and homeomorphic to a $(p-1)$-sphere. The $\calC^1$-regularity comes from Lemma~\ref{lem C1 popq} and Proposition~\ref{Prop hyper}. The $(q-1)$-dimensional isotropic subspace is given by the boundary map $\xi^q$.
		\end{proof}

	\subsection{Proof of Main Theorem and Corollary~\ref{coro A}}
	
	\begin{proof}[Proof of Main Theorem]
		Let $p > q \geq 2$ and $\Ga$ be a word hyperbolic group. Assume $\rho: \Ga \rtar \PO(p, q)$ is $\{P_1, P_q\}$-Anosov with $\opnm{vcd}(\Ga) = p$.  By Proposition~\ref{is cc}, $\Ga$ is $\mbf{H}^{p, q-1}$-convex cocompact and the limit set $\La_\rho = \xi^1 (\pa_\infty \Ga) \subset \Ein^{p-1, q-1}$ is a $\calC^1$ negative $(p-1)$-sphere and $T_z \La_\rho$ contains a $(q-1)$-dimensional isotropic subspace  for any $z\in \La_\rho$.
				
		Recall that the projection $\pi: \hat{\Ein}^{p-1, q-1} \rtar \Ein^{p-1, q-1}$ is a double cover, since $\La_\rho \simeq \mbf{S}^{p-1}$ is simply-connected, the restricted covering $\pi\inv(\La_\rho) \rtar \La_\Ga$ is trivial. Hence $\pi\inv(\La_\rho)$ is a disjoint union $\hat{\La}_+ \sqcup \hat{\La}_-$ where each component maps homeomorphically to $\La_\rho$. We denote $\hat{\La} := \hat{\La}_+ \subset \hat{\Ein}^{p-1, q-1}$ which is negative since $\La_\rho$ is negative. By Proposition~\ref{Lem 3.3 BK} and fixing a splitting $\varphi: \hat{\Ein}^{p-1, q-1} \simeq \mbf{S}^{p-1} \times \mbf{S}^{q-1}$,  there is a strictly $1$-Lipschitz map $f: A \rtar \mbf{S}^{q-1}$, where $A \subset \mbf{S}^{p-1}$, such that $\hat{\La} = \Graph f$.

	Since $\hat{\La}$ is homeomorphic to both $A$ and $\mbf{S}^{p-1}$, then $A\subset \mbf{S}^{p-1}$ is homeomorphic to $\mbf{S}^{p-1}$. We claim that $A = \mbf{S}^{p-1}$. Indeed, denote $h: \mbf{S}^{p-1} \rtar A$ the homeomorphism, by the invariance of domain, $A = h(\mbf{S}^{p-1})$ is open. Since $\mbf{S}^{p-1}$ is compact, then $A = h(\mbf{S}^{p-1})$ is also compact. Now $A \subset \mbf{S}^{p-1}$ is both open and closed, by the connectedness of $\mbf{S}^{p-1}$, $A =\mbf{S}^{p-1}$. Since $\hat{\Ga} = \Graph f$ is $\calC^1$, then so is $f: \mbf{S}^{p-1} \rtar \mbf{S}^{q-1}$.
				
		For any $x\in \pa_\infty  \Ga$, we say $\xi^1(x) \in \La_\Ga$ and lift it to $\hat{\xi}^1(x) \in \hat{\La}$, then write $\varphi (\hat{\xi}^1(x)) = (u, u') \in \mbf{S}^{p-1} \times \mbf{S}^{q-1}$ where $u' = f(u)$. We have the decomposition
	\[T_{(u, u')} \l(\mbf{S}^{p-1} \times \mbf{S}^{q-1}\r) = T_u \mbf{S}^{p-1} \oplus T_{u'} \mbf{S}^{q-1}\ .\]
For any $v \in T_{(u, u')} \Graph f$, we write $v = (v_1, v_2)$ where $v_1 \in T_u \mbf{S}^{p-1}$ and $v_2 = (T_u f) (v_1) \in T_{u'} \mbf{S}^{q-1}$. By Proposition~\ref{conformal}, the bilinear form $\ob$ is proportional to $\og_{\mbf{S}^{p-1}} \oplus \l(- \og_{\mbf{S}^{q-1}}\r)$, i.e. for any $v, w \in T_{(u, u')} \Graph f$, $\ob(v, w)$  is proportional to 
	\[\og_{\mbf{S}^{p-1}}(v_1, w_1) - \og_{\mbf{S}^{q-1}}(v_2, w_2) =  \og_{\mbf{S}^{p-1}}(v_1, w_1) - \og_{\mbf{S}^{q-1}}\l((T_u f) (v_1), (T_u f) (w_1) \r)\ .\]
		
	Since by Proposition~\ref{Prop hyper popq}, for any $x\in \pa_\infty \Ga$, $T_{\xi^1(x)} \La_\Ga$ contains an isotropic $(q-1)$-plane, and so does $T_{\hat{\xi}^1(x)} \hat{\La}$. Thus, we can find a $(q-1)$-subspace $W \subset T_u \mbf{S}^{p-1}$ such that $T_u f$ restricts to an isometry on $W$. We conclude that $f: \mbf{S}^{p-1} \rtar \mbf{S}^{q-1}$ is a submersion hence an open map. Note that $\mbf{S}^{p-1}$ is compact, then $f(\mbf{S}^{p-1})$ is compact hence closed in $\mbf{S}^{q-1}$. Therefore, $f$ is surjective. 
	
	Since $f$ is strictly $1$-Lipschitz, using $B(x, r)$ to denote a $r$-ball around $x$, 
	\[f\l(\mbf{S}^{p-1}\r) =  f\l(\overline{B}(x_0, \pi) \r)\subsetneqq \overline{B} \l(f(x_0), \pi \r) = \mbf{S}^{q-1}\ ,\] 
which yields a contradiction.	
	\end{proof}

	\begin{proof}[Main Theorem implies Corollary~\ref{coro A}]
			Let $\Ga$ be a discrete subgroup of $\PO(p, q)$. If $\Ga$ is $\mbf{H}^{p, q-1}$-convex cocompact, by Theorem~\ref{cc-Anosov}, $\Ga$ is $P_1$-Anosov. If $\Ga$ acts properly discontinuously and cocompactly on $\mbf{H}^{p, q-1}$, by Kassel--Tholozan criterion, $\Ga$ is $P_q$-Anosov and $\opnm{vcd}(\Ga) = p$. Thus, this corollary follows directly from our main theorem.
		\end{proof}


		\bibliographystyle{alpha}
		\addcontentsline{toc}{section}{References}
		\bibliography{refs}

	\end{document}